\theoremstyle{plain}
\newtheorem{lemma}{Lemma}
\newtheorem{theorem}{Theorem}
\newtheorem{corollary}{Corollary}
\theoremstyle{definition}
\theoremstyle{remark}
\newcommand{\nats}{\mathbb{N}}
\let\leq\leqslant
\let\geq\geqslant
\let\le\leqslant
\let\ge\geqslant
\def\cp{\,\square\,}
\DeclareMathOperator {\dil}{dil}
\DeclareMathOperator {\dist}{dist}
\DeclareMathOperator {\diam}{diam}
\title[Lower bounds for embeddings into hypercubes]{Lower bounds for dilation, wirelength, and edge congestion of embedding graphs into hypercubes}
\author[R.S. Rajan]{R. Sundara Rajan$^{1}$}
\address{$^1$Department of Mathematics, Hindustan Institute of Technology and Science,\\
Padur, Chennai 603103, India}
\author[T. Kalinowski]{Thomas Kalinowski$^{2}$}
\address{$^2$University of New England, School of Science and Technology, Armidale, NSW 2351, Australia}
\author[S. Klav\v zar]{Sandi Klav\v zar$^{3,4,5}$}
\address{$^3$Faculty of Mathematics and Physics, University of Ljubljana, Slovenia}
\address{$^4$Faculty of Natural Sciences and Mathematics, University of Maribor, Slovenia}
\address{$^5$Institute of Mathematics, Physics and Mechanics, Ljubljana, Slovenia}
\author[H. Mokhtar]{Hamid Mokhtar$^{6}$}
\address{$^{6}$School of Information Technology and Electrical Engineering\\
Faculty of Engineering, Architecture and Information Technology\\
St Lucia QLD 4072, Australia}
\author[T.M. Rajalaxmi]{T.M. Rajalaxmi$^{7}$}
\address{$^{7}$Department of Mathematics, SSN College of Engineering, Chennai 603110, India}
\email{vprsundar@gmail.com}
\email{tkalinow@une.edu.au}
\email{sandi.klavzar@fmf.uni-lj.si}
\email{h.mokhtar@uq.edu.au}
\email{laxmi.raji18@gmail.com}
\date{}
\keywords{Embedding; dilation; wirelength; edge congestion; bisection width; hypercube; complete
  multipartite graph; folded hypercube; wheel; Cartesian product of graphs}
\subjclass[2010]{05C10,68R10}
\begin{document}

\begin{abstract}
  Interconnection networks provide an effective mechanism for exchanging data between processors in
  a parallel computing system. One of the most efficient interconnection networks is the hypercube
  due to its structural regularity, potential for parallel computation of various algorithms, and
  the high degree of fault tolerance. Thus it becomes the first choice of topological structure of
  parallel processing and computing systems. In this paper, lower bounds for the dilation,
  wirelength, and edge congestion of an embedding of a graph into a hypercube are proved. Two of
  these bounds are expressed in terms of the bisection width. Applying these results, the dilation
  and wirelength of embedding of certain complete multipartite graphs, folded hypercubes, wheels,
  and specific Cartesian products are computed.
\end{abstract}

\maketitle

\section{Introduction}\label{sec:intro}

A suitable interconnection network is an important part for the design of a multicomputer or
multiprocessor system. Such a network is usually modeled by a symmetric graph, where the nodes
represent the processing elements and the edges represent the communication channels. Desirable
properties of an interconnection network include symmetry, embedding capabilities, relatively small
degree, small diameter, scalability, robustness, and efficient routing~\cite{Ramanathan1988}. One of
the most efficient interconnection networks is the hypercube due to its structural regularity,
potential for parallel computation of various algorithms, and the high degree of fault
tolerance~\cite{Saad1988}.

The hypercube has many excellent features and thus becomes the first choice of topological structure
of parallel processing and computing systems. The machines based on hypercubes such as the Cosmic
Cube from Caltech, the iPSC/2 from Intel, and Connection Machines have been implemented
commercially~\cite{Choudum2002}. Hypercubes are very popular models for parallel computation because
of their symmetry and relatively small number of interprocessor connections. Among the
  hypercube-based interconnection networks designed for extremely large-scale supercomputers that
  were studied in the last decade we mention metacubes~\cite{li-2010}, hierarchical cubic
  networks~\cite{bossard-2014, liu-2019}, and $k$-ary $n$-cubes~\cite{Fan2019a, yun-2016}. The
hypercube embedding problem is the problem of mapping a communication graph into a hypercube
multicomputer. Hypercubes are known to simulate other structures such as grids and binary
trees~\cite{Chen1995,Manuel2009}.

Graph embedding is an important technique that maps a guest graph into a host graph, usually an
interconnection network~\cite{Chaudhary1993}. The quality of an embedding can be measured by certain
cost criteria. One of these criteria which is considered very often is the \emph{dilation}. The
dilation of an embedding is defined as the maximum distance between a pair of vertices of the host
graph that are images of adjacent vertices of the guest graph. It is a measure for the communication
time needed when simulating one network on another~\cite{Leighton2014}.  A closely related and
important cost criterion is the {\em wirelength of an embedding} which is the sum of the dilations
in the host graph of edges in the guest graph. The wirelength of a graph embedding arises from VLSI
design, data structures and data representations, networks for parallel computer systems, biological
models that deal with cloning and visual stimuli, parallel architecture, structural engineering and
so on~\cite{Kuo2017, Rajantoappear, Xu2013}.

Another important cost criterion is the \emph{edge congestion}. The edge congestion of an embedding
is the maximum number of edges of the guest graph that are embedded on any single edge of the host
graph. An embedding with a large edge congestion faces many problems, such as long communication
delay, circuit switching, and the existence of different types of uncontrolled noise. In data
networking, network edge congestion occurs when a link or node is carrying so much data that its
quality of service deteriorates. Typical effects include packet loss or the blocking of new
connections. Therefore, a minimum edge congestion is a most desirable feature in network
embedding~\cite{Dvorak2007,Matsubayashi2015}.

Graph embeddings have been well-studied for a number of networks~\cite{Bezrukov2000, Bezrukov1998,
  Chen1995, Dvorak2007, J2007, Fan2006, Fan2019, Fan2019a, Fang2005, Han2010, Lai1999, Manuel2009,
  MANUEL2012, Miller2014, Rajasingh2012, Rajasingh2012a, Shalini2018, Xu2013}. While in actual
supercomputing applications it is usually not sufficient to be able to embed individual graphs
optimally, a good understanding of the embedding behaviour of single graphs can provide insights to
support the development of methodologies and algorithms for solving the more complex problems
arising in practice. In this paper, we study the dilation, wirelength, and congestion of embeddings
of graphs into hypercubes and proceed as follows. In the next section concepts needed are formally
introduced. In Section~\ref{sec:dilation}, we consider the dilation of embedding into $Q_n$ and in
the main result characterize the graphs $G$ with $\dil(G, Q_n)\le n-1$: they are precisely the
graphs which contain a perfect anti-matching. Then, in Section~\ref{sec:two-bounds}, we prove a
general lower bound on the edge congestion and a lower bound on the wirelength of embeddings into
hypercubes. Both bounds are expressed in terms of the bisection width. In the remainder of the paper
these results are applied to particular graphs which have been studied in the supercomputing
literature before~\cite{Lu2009,Guo2013,Bossard2014}. The specific results obtained in
Sections~\ref{sec:complete-multipartite} to \ref{sec:cartesian} are summarized in
Table~\ref{table1}, where $K_{2^{n-p},\dots,2^{n-p}}$, $FQ_n$ and $W_{2^n}$ denote the complete
$2^p$-partite graph with parts of size $2^{n-p}$, the \emph{folded $n$-hypercubes} and the
\emph{$2^n$-wheel}, respectively, and $G\square H$ is the cartesian product of two graphs $G$ and
$H$ (following standard notation in the graph theory literature, see for
instance~\cite{Imrich2008}). All of these graphs are defined precisely in the sections containing
the corresponding results.

\begin {table}[htb]
\caption{Summary of the results in Sections~\ref{sec:complete-multipartite} to
  \ref{sec:cartesian}.}
\label{table1}
\begin{center}
\begin{tabular}{ScScSc}\toprule
\vspace{0.1 cm}
\textbf{\begin{tabular}{c}
          Embedding \\
          $f:G \to Q_n$ \\
        \end{tabular}
} & \textbf{Dilation} & \textbf{Wirelength}  \\ \toprule
    \vspace{0.15 cm}
%%%%
%%%%
\vspace{0.15 cm}
   $G=K_{2^{n-p},\dots,2^{n-p}}$  &
                                    \begin{tabular}{lcl}
                                      $n$ &for& $p=n$\\
                                      $n-1$ &for& $n-\log_2(n+1)$\\
                                          &&$<p\leq n-1$\\
                                      \multicolumn{3}{c}{(Corollary~\ref{cor:dilation_n-1})}
                                    \end{tabular}
&
   \begin{tabular}{r}
     $n2^{2n-p-2}(2^{p}-1)$ \\
     (Theorem~\ref{thm:embedding_multipartites})
   \end{tabular} \\ \midrule
          $G=FQ_n$ &$2$~\cite{MANUEL2012} &
          $n2^n$ (Theorem~\ref{thm:folded-hypercubes}) \\ \midrule
          $G=W_{2^n}$  &
                         \begin{tabular}{c}
                           $n$\\
                           (Theorem~\ref{thm:maximal_dilation})
                         \end{tabular}
 &
   \begin{tabular}{r}
     $(n+2)2^{n-1}$\\
     (Theorem~\ref{thm:wheel_wirelength})
   \end{tabular}
  \\ \midrule
  \begin{tabular}{r}
    $G=K_{2^{n/2}}\cp K_{2^{n/2}}$\\ ($n$ even)
  \end{tabular}
  & $\leq n/2$
 &
   \begin{tabular}{c}
     $n2^{3n/2-2}$\\
     (Theorem~\ref{thm:cartesian_prod_wirelength})
   \end{tabular}
  \\ \bottomrule
\end{tabular}
\end{center}
\end {table}

\section{Preliminaries}\label{sec:prelim}

In this section, we give basic definitions and preliminaries related to embedding problems. Let $G$
and $H$ be finite graphs. Then an \emph{embedding of $G$ into $H$} is a pair $(f,P_f)$, where
$f: V(G)\to V(H)$ is a one-to-one mapping and $P_f$ is a mapping from $E(G)$ to the set of paths in
$H$ such that each edge $e=uv\in E(G)$ is assigned a path $P_f(e)$ in $H$ between $f(u)$ and
$f(v)$. By abuse of language we will also refer to an embedding $(f, P_f)$ simply by $f$.

The length of $P_{f}(e)$ is the \emph{dilation} $\dil_f(e)$ of the edge $e$ (with respect to the
embedding $(f,P_f)$). The maximum dilation over all edges of $G$ is called \emph{the dilation of the
  embedding}. The \emph{dilation of embedding $G$ into $H$} is the minimum dilation taken over all
embeddings of $G$ into $H$ and denoted by $\dil(G,H)$, cf.~\cite{Bezrukov1998}. In other words,
\[\dil(G,H)=\min_f\max_{e\in E(G)} \dil_f(e)\,,\]
where the minimum is taken over all injections $f:V(G)\to V(H)$. It is not necessary to specify
$P_f$ when we are interested in the dilation of embedding $G$ into $H$, because we can take
$P_f(uv)$ to be any shortest path between $f(u)$ and $f(v)$. The \emph{diameter} of $H$, denoted by
$\diam(H)$, is the maximum distance between any two vertices of $H$, and this provides a trivial
upper bound for $\dil(G,H)$.

The wirelength of an embedding $f= (f, P_f)$ of $G$ into $H$ is
\begin{equation}\label{eq:WL}
  WL_{f}(G,H) = \sum_{e_G\in E(G)}\dil_f(e_G)\,,
\end{equation}
and the \emph{wirelength of embedding $G$ into $H$} is
\[WL(G,H)=\underset{f}\min ~WL_{f}(G,H)\,,\]
see~\cite{Manuel2009}. We also refer
to~\cite{Bezrukov2000, Bezrukov1998, Opatrny2000} for the search of embeddings of $G$ into $H$ that
attain the value $WL(G,H)$.

Let $f = (f, P_f)$ be an embedding of $G$ into $H$. For $e=uv\in E(H)$, let $EC_{f}(e)$ denote
the number of edges $e'\in E(G)$ such that $e$ is in the path $P_{f}(e')$ between $f(u)$ and
$f(v)$ in $H$. Then the \emph{edge congestion of $f$} is
\[EC_{f}(G,H)=\max_{e\in E(H)} EC_{f}(e)\]
and the \emph{edge congestion of embedding $G$ into $H$} is
\[EC(G,H)=\min_fEC_{f}(G,H)\,,\]
where the minimum is taken over all embeddings $f = (f, P_f)$ of $G$ into $H$. Note that
\begin{equation}  \label{eq:WL_by_EC}
  WL_{f}(G,H) = \sum_{e_H\in E(H)}EC_{f}(e_H)\,.
\end{equation}
The above problems can be specialized to several distance problems. For example, if the host graph
is a path, then the dilation of embedding graph into a path is the \emph{bandwidth problem}. Further
distance problems are listed in Table~\ref{table2}, where $P_n$, $C_n$ and $K_n$, are the path, the
cycle, and the complete graph of order $n$, respectively, and $K_{a,b}$ is the complete bipartite
graphs on $a+b$ vertices.

\begin {table}[htb]
\caption {Various embedding problems}\label{table2}
\begin{center}
\begin{tabular}{ScScScSc}\toprule
\vspace{0.1 cm}
\textbf{\begin{tabular}{c}
          %\hline
          % after \\: \hline or \cline{col1-col2} \cline{col3-col4} ...
          Embedding \\
          $f:G \rightarrow H$ \\
          %\hline
        \end{tabular}
  } & \textbf{Dilation} & \textbf{Wirelength}  & \textbf{Congestion} \\ \midrule
  \textbf{$H=P_n$} & Bandwidth~\cite{Harper1966} & MinLA~\cite{Adolphson1973} & Cutwidth~\cite{Harper1964}\\
  \textbf{$H=C_n$} & Cyclic bandwidth~\cite{Leung1984} & Cyclic wirelength~\cite{Bezrukov1998a} & Cyclic cutwidth~\cite{Chavez1998} \\
  \textbf{$G=K_n$} & ${\rm diam}(H)$~\cite{Wu1985} & Wiener index of $H$~\cite{Wiener1947} & Forwarding index of $H$~\cite{Heydemann1989} \\
  \textbf{$G=K_{a,b}$} & Makespan~\cite{SKORIN-KAPOV1993}  & Average load & Maximum load \\ \bottomrule
\end{tabular}
\end{center}
\end {table}

Let $n\geq 1$. The \emph{$n$-cube} ($n$-dimensional hypercube) has vertex set $\{0,1\}^n$, and
vertices $x,y\in V(Q_{n})$ are adjacent if and only if the corresponding binary strings differ
exactly in one bit. Equivalently, the vertices of $Q_{n}$ can be identified with integers
$0,1,\dots,2^n-1$, where vertices $i$ and $j$ are adjacent if and only if $i-j=\pm 2^{p}$ for some
integer $p\geq 0$.

For disjoint subsets $A,B\subseteq V(G)$ we will use the notation
$E_G(A,B)=\{uv\in E(G)~|~ u\in A,\, v\in B\}$ and $E_G(A)=\{uv~|~ u,v\in A\}$. If $x$ is a vertex of
a graph $G$, then $N_i(x)$ is the set of vertices in $G$ at distance $i$ from $x$. The maximum
degree of $G$ is denoted by $\Delta(G)$ and its order by $n(G)$.

\section{Graphs with large dilation of embedding into hypercubes}\label{sec:dilation}

The dilation, the wirelength, and the edge congestion problem are different in the sense that an
embedding that gives the minimum dilation need not give the minimum congestion (wirelength) and
vice-versa. From $\diam(Q_n)=n$ we obtain immediately the upper bound $\dil(G,Q_n)\leq n$ for every
graph $G$. To characterize graphs $G$ of order $2^n$ with $\dil(G, Q_n)=n$, we need the following
concept. A pair $(u,u')\in V(G)\times V(G)$, is said to be an antipodal pair (of
vertices) if the distance between $u$ and $u'$ is equal to the diameter of
$G$. If this is the case, we also say that $u'$ is an antipodal vertex of $u$
  and $u$ is an antipodal vertex of $u'$. In the hypercube $Q_n$, every vertex $u\in V(Q_n)$ has a
unique antipodal vertex which we denote by $\bar u$. If $G$ is a graph, then the set of pairs of
vertices $X = \{\{x_1,y_1\}, \dots, \{x_k,y_k\}\}$ forms an \emph{anti-matching}, if
$x_1y_1\notin E(G),\ldots, x_ky_k\notin E(G)$. That is, $X$ is an anti-matching if
$x_1y_1, \ldots, x_ky_k$ form a matching in the \emph{complement of $G$}, that is, the
  graph which has the same vertex set as $G$, and edge set
  $\{xy\,\vert\,x,y\in V(G),\,xy\not\in E(G)\}$. In addition, we say that $X$ is a {\em perfect
  anti-matching} if $2|X| = n(G)$. Now we have the following characterization.

\begin{theorem}\label{thm:maximal_dilation}
  Let $G$ be a graph of order $2^n$. Then $\dil(G,Q_n)\leq n-1$ if and only if $G$ contains a
  perfect anti-matching.
\end{theorem}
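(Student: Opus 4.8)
The plan is to prove both directions by exploiting the bipartite structure of $Q_n$. Recall that $Q_n$ is bipartite with the two sides being the vertices of even and odd weight, each of size $2^{n-1}$, and that two vertices of $Q_n$ are at distance $n$ from each other precisely when they are antipodal, i.e.\ $y = \bar x$. The crucial observation is that an embedding $f\colon V(G)\to V(Q_n)$ (a bijection, since $n(G)=2^n$) has dilation at most $n-1$ if and only if no edge of $G$ is mapped to an antipodal pair, that is, for every $uv\in E(G)$ we have $f(v)\ne \overline{f(u)}$.

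For the ``if'' direction, suppose $X=\{\{x_1,y_1\},\dots,\{x_{2^{n-1}},y_{2^{n-1}}\}\}$ is a perfect anti-matching of $G$. I would construct $f$ by sending each pair $\{x_i,y_i\}$ to an antipodal pair of $Q_n$: fix any enumeration of the $2^{n-1}$ antipodal pairs of $Q_n$ and set $\{f(x_i),f(y_i)\}$ equal to the $i$-th such pair (the assignment within the pair is irrelevant). This is a bijection $V(G)\to V(Q_n)$. Since the only antipodal pairs in the image are the images of the anti-matching pairs $\{x_i,y_i\}$, and these are non-edges of $G$ by definition of an anti-matching, no edge of $G$ is mapped to an antipodal pair; hence every edge has dilation at most $n-1$, so $\dil(G,Q_n)\le n-1$.

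For the ``only if'' direction, suppose $\dil(G,Q_n)\le n-1$ and let $f$ be an embedding achieving this. Then for every edge $uv\in E(G)$, $f(v)\ne\overline{f(u)}$. Now consider the $2^{n-1}$ antipodal pairs $\{z,\bar z\}$ of $Q_n$ and pull them back: set $X=\{\{f^{-1}(z),f^{-1}(\bar z)\} : \{z,\bar z\}\text{ an antipodal pair of }Q_n\}$. These $2^{n-1}$ pairs partition $V(G)$, so $2|X|=2^n=n(G)$. For each such pair $\{x_i,y_i\}$ with $f(x_i)=z$, $f(y_i)=\bar z$, the pair $x_iy_i$ cannot be an edge of $G$ (otherwise its image would be an antipodal pair, contradicting $\dil_f\le n-1$); hence $X$ is a perfect anti-matching of $G$.

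The only genuinely substantive point is the distance characterization in $Q_n$ — that $d_{Q_n}(x,y)=n$ iff $y=\bar x$, equivalently that the complement of the antipodal perfect matching consists exactly of the pairs at distance $\le n-1$ — which is immediate since $d_{Q_n}(x,y)$ equals the Hamming distance of the bit strings and this equals $n$ only when every coordinate differs. Everything else is bookkeeping about bijections, so I expect no real obstacle; the one thing to state carefully is that $n(G)=2^n$ forces $f$ to be a bijection, which is what makes the pullback of the antipodal matching a perfect anti-matching.
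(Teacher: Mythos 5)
Your proof is correct and follows essentially the same route as the paper's: both directions rest on the observation that two vertices of $Q_n$ are at distance $n$ exactly when they are antipodal, so a bijective embedding has dilation at most $n-1$ precisely when no edge of $G$ lands on an antipodal pair; the forward direction pulls back the antipodal pairing to obtain the perfect anti-matching and the converse pushes the anti-matching onto antipodal pairs. Your write-up is in fact slightly more explicit than the paper's about why the dilation bound forces non-adjacency, but the argument is the same.
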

\begin{proof}
  Let $\dil(G,Q_n)\leq n-1$ and let $(f, P_f)$ be a corresponding embedding of $G$ into $Q_n$, that
  is, $\dil_f(G,Q_n)=\dil(G,Q_n)$. Consider the partition
  $X' = \{ \{u,\overline{u}\}~|~u\in V(Q_n)\}$ of $V(Q_n)$ into $2^{n-1}$ antipodal
  pairs. If $e=\{f^{-1}(u),f^{-1}(\bar u)\}\in E(G)$ for some $u\in V(Q_n)$, then
  \[\dil_f(G,Q_n)\geq\dil_f(e)\geq\dist_{Q_n}(u,\bar u)=n,\]
  and this contradicts the assumption. We conclude that $f^{-1}(u)$ is not adjacent to
  $f^{-1}(\overline{u})$, and it follows that
  $X = \{ \{f^{-1}(u),f^{-1}(\overline{u})\}~|~u\in V(Q_n)\}$ is a perfect anti-matching.

  Conversely, let $X$ be a perfect anti-matching of $G$. We now define an embedding $(f, P_f)$ of
  $G$ into $Q_n$ by mapping each pair $\{x,y\}$ from $X$ into an antipodal pair of vertices of
  $Q_n$, that is, $\{f(x), f(y)\} = \{v, \overline{v}\}$ for some $v\in V(Q_n)$. Since each vertex
  of $Q_n$ has a unique antipodal vertex, it follows that $\dil_f(e) \le n-1$ for each edge $e$ of
  $G$ and hence $\dil(G, Q_n)\leq n-1$.
\end{proof}
To determine a large class of graphs $G$ with $\dil(G,Q_n)=n-1$ we state the following lemma that
could be applied elsewhere.
\begin{lemma}\label{lem:dilation lower bound}
  If $G$ is a graph of order $2^n$, then
  \[\dil(G,Q_n)\geq\max\left\{k\,:\,\sum_{i=1}^{k-1}\binom{n}{i}<\Delta(G)\right\}.\]
\end{lemma}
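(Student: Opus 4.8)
The plan is to argue by contradiction, using a vertex of maximum degree together with a counting argument on balls in the hypercube. Set $k = \max\{j : \sum_{i=1}^{j-1}\binom{n}{i} < \Delta(G)\}$ and suppose, for contradiction, that $\dil(G,Q_n) \le k-1$. Fix an embedding $(f,P_f)$ of $G$ into $Q_n$ realizing this dilation, and let $x \in V(G)$ be a vertex with $\deg_G(x) = \Delta(G)$.

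Next I would observe that every neighbour $y$ of $x$ in $G$ is mapped to a vertex $f(y)$ with $d_{Q_n}(f(x),f(y)) \le \dil_f(xy) \le k-1$, using that $P_f(xy)$ is a path of length $\dil_f(xy)$ and that graph distance lower bounds path length. Hence each $f(y)$ lies in $\bigcup_{i=1}^{k-1} N_i(f(x))$, a set of cardinality $\sum_{i=1}^{k-1}\binom{n}{i}$, since exactly $\binom{n}{i}$ vertices of $Q_n$ are at distance $i$ from any fixed vertex. Because $f$ is injective and $f(y) \ne f(x)$ for every neighbour $y$ of $x$, the $\Delta(G)$ neighbours of $x$ inject into this set, so $\Delta(G) \le \sum_{i=1}^{k-1}\binom{n}{i}$, contradicting the defining inequality for $k$. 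Therefore $\dil(G,Q_n) \ge k$.

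The argument is short once set up, and I do not expect a serious obstacle; the only points needing a little care are bookkeeping. First, one should check that the set $\{j : \sum_{i=1}^{j-1}\binom{n}{i} < \Delta(G)\}$ is nonempty and bounded, so the maximum is well defined: nonemptiness holds as soon as $G$ has an edge, since the empty sum is $0$, and boundedness follows from $\sum_{i=1}^{n}\binom{n}{i} = 2^n - 1 \ge \Delta(G)$, as $G$ has order $2^n$. Second, the boundary case $k = 1$ deserves a remark, but there the claimed bound $\dil(G,Q_n) \ge 1$ is trivial whenever $G$ has an edge. No structural facts about $Q_n$ are used beyond the sphere sizes $|N_i(v)| = \binom{n}{i}$, so the same scheme would adapt to other host graphs with controlled ball growth.
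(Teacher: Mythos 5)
Your proof is correct and uses essentially the same argument as the paper: count the vertices of $Q_n$ within distance $k-1$ of the image of a maximum-degree vertex and compare with $\Delta(G)$. The only difference is cosmetic — you phrase it as a contradiction, while the paper directly exhibits a neighbour whose image lies outside $N_1(v)\cup\dots\cup N_{k-1}(v)$ and hence an edge of dilation at least $k$.
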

\begin{proof}
  Let $k$ be a positive integer satisfying $\binom{n}{0}+\dots+\binom{n}{k-1}<\Delta(G)$, and
  let $(f, P_f)$ be an embedding of $G$ into $Q_n$. Let $u$ be a vertex of $G$ with
  $\deg(u) = \Delta(G)$ and let $f(u) = v\in V(Q_n)$. Setting
  $X = N_1(v)\cup\dots\cup N_{k-1}(v)$, the assumption on $k$ implies that
  \[|X| = |N_1(v)|+\cdots+|N_{k-1}(v)| = \sum_{i=1}^{k-1} \binom{n}{i} <
    \Delta(G)\,.\] Thus, there exists at least one vertex $w$ of $G$ adjacent to $u$ such that
  $f(w)\in V(Q_n)\setminus X$. It follows that the length of $P_f(uw)$ is at least $k$ and
  consequently $\dil(G, Q_n)\geq k$.
\end{proof}
Combining Lemma~\ref{lem:dilation lower bound} with Theorem~\ref{thm:maximal_dilation} we obtain the following
corollary.
\begin{corollary}\label{cor:dilation_n-1}
  If $G$ is a graph of order $2^n$ with $\Delta(G)\ge 2^n-n-1$ that contains a perfect
  anti-matching, then $\dil(G,Q_n) = n-1$.
\end{corollary}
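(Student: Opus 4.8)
The plan is to combine the two preceding results in the obvious way, so the proof is essentially a two-line deduction. First I would apply Lemma~\ref{lem:dilation lower bound} to $G$. The hypothesis $\Delta(G)\ge 2^n-n-1$ is exactly what is needed to push the lower bound up to $n$: since
\[
\sum_{i=1}^{n-1}\binom{n}{i} \;=\; 2^n - \binom{n}{0} - \binom{n}{n} \;=\; 2^n-2 \;\ge\; 2^n-n-1
\]
whenever $n\ge 1$, one checks that in fact $\sum_{i=1}^{n-2}\binom{n}{i} = 2^n - 2 - n < 2^n-n-1 \le \Delta(G)$, so $k=n-1$ satisfies the condition defining the maximum in the lemma, hence $\dil(G,Q_n)\ge n-1$.

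Second I would invoke Theorem~\ref{thm:maximal_dilation}: since $G$ has order $2^n$ and contains a perfect anti-matching, that theorem gives $\dil(G,Q_n)\le n-1$. Combining the two inequalities yields $\dil(G,Q_n)=n-1$, as claimed.

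Since everything reduces to a direct application of an earlier lemma and theorem, there is no real obstacle; the only point requiring a moment's care is the little binomial-coefficient computation verifying that the threshold $2^n-n-1$ is precisely the right one to make $k=n-1$ admissible (and one should note the edge behaviour for small $n$, e.g. $n=1$, where the statement degenerates but remains correct). No substantial new idea is needed beyond assembling the pieces.
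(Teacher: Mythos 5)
Your proposal is correct and follows essentially the same route as the paper: the upper bound $\dil(G,Q_n)\le n-1$ from Theorem~\ref{thm:maximal_dilation} via the perfect anti-matching, and the lower bound from Lemma~\ref{lem:dilation lower bound} using $\sum_{i=1}^{n-2}\binom{n}{i}=2^n-n-2<2^n-n-1\le\Delta(G)$. The only blemish is the phrase ``push the lower bound up to $n$'' (it should read $n-1$), but the computation you actually carry out is the right one.
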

\begin{proof}
  By Theorem~\ref{thm:maximal_dilation}, $\dil(G,Q_n) \le n-1$ and the reverse inequality follows
  from Lemma~\ref{lem:dilation lower bound} and
  \[\sum_{i=1}^{n-2}\binom{n}{i}=2^n-\binom{n}{0}-\binom{n}{n-1}-\binom{n}{n}=2^n-n-2<\Delta(G).\qedhere\]
\end{proof}
For illustration, consider $G=K_{4,4,4,4}$. Then $\Delta(G) = 12 \ge 2^4-4-1$. Since $G$ contains a
perfect anti-matching, $\dil(G, Q_4) = 3$. In Figure~\ref{fig1} a corresponding embedding is shown.
\begin{figure}[htb]
  \centering
  \begin{tikzpicture}[every node/.style={circle,fill=black,outer sep=1pt,inner sep=1pt}]
    \node[label=225:{$0$}] (v0) at (0,1.5) {};
    \node[label=225:{$1$}] (v1) at (0.5,1) {};
    \node[label=225:{$14$}] (v14) at (1,0.5) {};
    \node[label=225:{$15$}] (v15) at (1.5,0) {};
    \node[label=-45:{$2$}] (v2) at (2.5,0) {};
    \node[label=-45:{$3$}] (v3) at (3,.5) {};
    \node[label=-45:{$12$}] (v12) at (3.5,1) {};
    \node[label=-45:{$13$}] (v13) at (4,1.5) {};
    \node[label=45:{$4$}] (v4) at (4,2.5) {};
    \node[label=45:{$5$}] (v5) at (3.5,3) {};
    \node[label=45:{$10$}] (v10) at (3,3.5) {};
    \node[label=45:{$11$}] (v11) at (2.5,4) {};
    \node[label=135:{$6$}] (v6) at (1.5,4) {};
    \node[label=135:{$7$}] (v7) at (1,3.5) {};
    \node[label=135:{$8$}] (v8) at (.5,3) {};
    \node[label=135:{$9$}] (v9) at (0,2.5) {};
    \draw (v0) -- (v2); \draw (v0) -- (v3); \draw (v0) -- (v12); \draw (v0) -- (v13);
    \draw (v0) -- (v4); \draw (v0) -- (v5); \draw (v0) -- (v10); \draw (v0) -- (v11);
    \draw (v0) -- (v6); \draw (v0) -- (v7); \draw (v0) -- (v8); \draw (v0) -- (v9);
    \draw (v1) -- (v2); \draw (v1) -- (v3); \draw (v1) -- (v12); \draw (v1) -- (v13);
    \draw (v1) -- (v4); \draw (v1) -- (v5); \draw (v1) -- (v10); \draw (v1) -- (v11);    
    \draw (v1) -- (v6); \draw (v1) -- (v7); \draw (v1) -- (v8);  \draw (v1) -- (v9);
    \draw (v14) -- (v2); \draw (v14) -- (v3); \draw (v14) -- (v12); \draw (v14) -- (v13);
    \draw (v14) -- (v4); \draw (v14) -- (v5); \draw (v14) -- (v10); \draw (v14) -- (v11);
    \draw (v14) -- (v6); \draw (v14) -- (v7); \draw (v14) -- (v8); \draw (v14) -- (v9);
    \draw (v15) -- (v2); \draw (v15) -- (v3); \draw (v15) -- (v12); \draw (v15) -- (v13);
    \draw (v15) -- (v4); \draw (v15) -- (v5); \draw (v15) -- (v10); \draw (v15) -- (v11);
    \draw (v15) -- (v6); \draw (v15) -- (v7); \draw (v15) -- (v8); \draw (v15) -- (v9);    
    \draw (v2) -- (v4); \draw (v2) -- (v5); \draw (v2) -- (v10); \draw (v2) -- (v11);
    \draw (v2) -- (v6); \draw (v2) -- (v7); \draw (v2) -- (v8); \draw (v2) -- (v9);
    \draw (v3) -- (v4); \draw (v3) -- (v5); \draw (v3) -- (v10); \draw (v3) -- (v11);
    \draw (v3) -- (v6); \draw (v3) -- (v7); \draw (v3) -- (v8); \draw (v3) -- (v9);
    \draw (v12) -- (v4); \draw (v12) -- (v5); \draw (v12) -- (v10); \draw (v12) -- (v11);
    \draw (v12) -- (v6); \draw (v12) -- (v7); \draw (v12) -- (v8); \draw (v12) -- (v9);
    \draw (v13) -- (v4); \draw (v13) -- (v5); \draw (v13) -- (v10); \draw (v13) -- (v11);
    \draw (v13) -- (v6); \draw (v13) -- (v7); \draw (v13) -- (v8); \draw (v13) -- (v9);
    \draw (v4) -- (v6); \draw (v4) -- (v7); \draw (v4) -- (v8); \draw (v4) -- (v9);
    \draw (v5) -- (v6); \draw (v5) -- (v7); \draw (v5) -- (v8); \draw (v5) -- (v9);
    \draw (v10) -- (v6); \draw (v10) -- (v7); \draw (v10) -- (v8); \draw (v10) -- (v9);
    \draw (v11) -- (v6); \draw (v11) -- (v7); \draw (v11) -- (v8); \draw (v11) -- (v9);
    \draw[ultra thick,-stealth] (5,2) to node[above,fill=none] {$f$} (6.5,2);
    \node[label=225:{$6$}] (w7) at (7,1) {};
    \node[label=225:{$7$}] (w8) at (9,1) {};
    \node[label=135:{$4$}] (w5) at (7.5,1.5) {};
    \node[label=135:{$5$}] (w6) at (9.5,1.5) {};
    \node[label=225:{$2$}] (w3) at (7,3) {};
    \node[label=225:{$3$}] (w4) at (9,3) {};
    \node[label=135:{$0$}] (w1) at (7.5,3.5) {};
    \node[label=135:{$1$}] (w2) at (9.5,3.5) {};
    \node[label=225:{$14$}] (w15) at (11,1) {};
    \node[label=225:{$15$}] (w16) at (13,1) {};
    \node[label=45:{$12$}] (w13) at (11.5,1.5) {};
    \node[label=45:{$13$}] (w14) at (13.5,1.5) {};
    \node[label=-135:{$10$}] (w11) at (11,3) {};
    \node[label=-135:{$11$}] (w12) at (13,3) {};
    \node[label=45:{$8$}] (w9) at (11.5,3.5) {};
    \node[label=45:{$9$}] (w10) at (13.5,3.5) {};
    \draw[thick] (w7) -- (w8) -- (w6) -- (w5) -- (w1) -- (w2) -- (w4) -- (w3) -- (w7) -- (w5);
    \draw[thick] (w15) -- (w16) -- (w14) -- (w13) -- (w9) -- (w10) -- (w12) -- (w11) -- (w15) -- (w13);
    \draw[thick] (w1) -- (w3);
    \draw[thick] (w2) -- (w6);
    \draw[thick] (w4) -- (w8);
    \draw[thick] (w9) -- (w11);
    \draw[thick] (w10) -- (w14);
    \draw[thick] (w12) -- (w16);
    \draw[thick,bend left=15] (w7) to (w15);
    \draw[thick,bend left=15] (w8) to (w16);
    \draw[thick,bend left=25] (w5) to (w13);
    \draw[thick,bend left=25] (w6) to (w14);
    \draw[thick,bend left=15] (w3) to (w11);
    \draw[thick,bend left=15] (w4) to (w12);
    \draw[thick,bend left=25] (w1) to (w9);
    \draw[thick,bend left=25] (w2) to (w10);
    \node[fill=none,rectangle] at (2,-.8) {$K_{4,4,4,4}$};
    \node[fill=none,rectangle] at (10,-.8) {$Q_4$};
  \end{tikzpicture}
\caption{An embedding of $K_{4,4,4,4}$ into $Q_4$ with dilation $3$. The vertices of $K_{4,4,4,4}$ are labeled such
  that we can take $f$ to be the identity. The perfect anti-matching in $G$ consists of the pairs
  $\{i,15-i\}$ for $i=0,1,\dots,7$.}\label{fig1}
\end{figure}

\section{Two lower bounds in terms of the bisection width}\label{sec:two-bounds}

In this section, we give a general lower bound on the edge congestion and on the wirelength of
embedding into hypercubes. Both bound involve the bisection width that is defined as follows. The
\emph{bisection width} $BW(G)$ of a graph $G$ is the minimum number of edges necessary in an edge cut
so that the sizes of the two sides of the cut differ by at most one.

The first lower bound is established by the following averaging argument.
\begin{theorem}\label{congestion lower bound general}
  If $G$ and $H$ are graphs of the same order, then
  \[EC(G, H)\geq \frac{BW(G)}{BW(H)}\,.\]
\end{theorem}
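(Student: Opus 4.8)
The plan is to fix an arbitrary embedding $f = (f, P_f)$ of $G$ into $H$, transport an optimal bisection of $H$ back to $G$ through $f$, and then run an averaging argument over the edges of the $H$-cut. Since $G$ and $H$ have the same order, the map $f \colon V(G) \to V(H)$ is a bijection, which is the only structural fact about the hypothesis that we need.

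First I would choose a partition $V(H) = A \cup B$ realizing the bisection width, so that $\bigl||A| - |B|\bigr| \le 1$ and $|E_H(A,B)| = BW(H)$. Pulling back, set $A' = f^{-1}(A)$ and $B' = f^{-1}(B)$; because $f$ is a bijection this is a partition of $V(G)$ with $\bigl||A'| - |B'|\bigr| \le 1$, so by the definition of the bisection width we get $|E_G(A',B')| \ge BW(G)$.

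The key step is the observation that every edge $e' = uv \in E_G(A',B')$ forces its path $P_f(e')$, which runs from $f(u) \in A$ to $f(v) \in B$, to use at least one edge of the cut $E_H(A,B)$. Counting the incidences between edges of $E_G(A',B')$ and edges of $E_H(A,B)$ lying on their paths in two ways gives
\[
  \sum_{e \in E_H(A,B)} EC_f(e) \;\ge\; \bigl|E_G(A',B')\bigr| \;\ge\; BW(G)\,.
\]
Since the left-hand side has $|E_H(A,B)| = BW(H)$ summands, some edge $e$ of the cut satisfies $EC_f(e) \ge BW(G)/BW(H)$, hence $EC_f(G,H) \ge BW(G)/BW(H)$; as $f$ was arbitrary, the claimed bound on $EC(G,H)$ follows.

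I do not expect a genuine obstacle here: the whole content is the pigeonhole/averaging inequality above, and the one thing to state carefully is that transporting an optimal bisection of $H$ along the bijection $f$ yields an admissible bisection of $G$ (parts differing by at most one), which is immediate.
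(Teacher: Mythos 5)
Your proposal is correct and follows essentially the same route as the paper: pull back an optimal bisection of $H$ along the bijection $f$, note that each edge of $E_G(f^{-1}(A),f^{-1}(B))$ forces its path to cross the cut $E_H(A,B)$, and average over the $BW(H)$ cut edges. The only cosmetic difference is that the paper fixes an embedding achieving $EC(G,H)$ while you quantify over all embeddings, which amounts to the same thing.
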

\begin{proof}
  Set $n = n(G)$ $(= n(H))$. Let $A\cup B$ be a partition of $V(H)$ such that
  $|A|=\lceil n/2\rceil$, $|B|=\lfloor n/2\rfloor$, and $|E_H(A,B)|=BW(H)$, and let
  $f:G\to H$ be an embedding with $EC_f(G,H)=EC(G,H)$. Then
  \begin{equation}\label{eq:bw_bound}
    BW(G)\leq \left\lvert E_G(f^{-1}(A), f^{-1}(B))\right\rvert\leq \sum_{e\in E_H(A,B)} EC_f(e)\,.
  \end{equation}
 where the first inequality follows from the definition of the bisection width, and the second one
 from the fact that for every edge $uv\in E_G(f^{-1}(A), f^{-1}(B))$ the path $P_f(uv)$ has to use
 at least one edge from $E_H(A,B)$. Finally, we conclude
 \begin{multline*}
   \frac{BW(G)}{BW(H)}=\frac{BW(G)}{\left\lvert E_H(A,B)\right\rvert}\leq\frac{1}{\left\lvert
       E_H(A,B)\right\rvert}\sum_{e\in E_H(A,B)} EC_f(e) \leq\max_{e\in E_H(A,B)} EC_f(e)\\
   \leq\max_{e\in E(H)} EC_f(e)=EC_f(G,H)=EC(G,H).  \qedhere
 \end{multline*}
\end{proof}

\begin{corollary}
\label{congestion lower bound}
If $G$ is a graph of order $2^n$, then
\[EC(G, Q_n)\geq \frac{BW(G)}{2^{n-1}}\,.\]
\end{corollary}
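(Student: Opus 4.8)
The plan is to derive this directly from Theorem~\ref{congestion lower bound general} by specializing $H = Q_n$ and bounding $BW(Q_n)$. Since $Q_n$ has $2^n$ vertices, which equals $n(G)$, the hypothesis of Theorem~\ref{congestion lower bound general} is met, so it yields
\[
  EC(G, Q_n) \geq \frac{BW(G)}{BW(Q_n)}\,.
\]
Thus the whole task reduces to showing $BW(Q_n) \leq 2^{n-1}$, after which dividing $BW(G)$ by the smaller quantity $2^{n-1}$ gives the claimed inequality.

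To bound $BW(Q_n)$, I would exhibit an explicit balanced cut of the required size. Partition $V(Q_n) = \{0,1\}^n$ into $A = \{x \in \{0,1\}^n : x_1 = 0\}$ and $B = \{x \in \{0,1\}^n : x_1 = 1\}$. Then $|A| = |B| = 2^{n-1}$, so this is an admissible bisection, and the edges of $E_{Q_n}(A,B)$ are precisely the pairs of vertices differing only in the first coordinate, of which there are exactly $2^{n-1}$. By the definition of the bisection width this gives $BW(Q_n) \leq 2^{n-1}$, and combining with the displayed inequality completes the proof:
\[
  EC(G, Q_n) \geq \frac{BW(G)}{BW(Q_n)} \geq \frac{BW(G)}{2^{n-1}}\,.
\]

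There is essentially no obstacle here: the statement is a one-line consequence of the theorem together with an elementary upper bound on $BW(Q_n)$, and one does not even need the (more delicate) matching lower bound $BW(Q_n) \geq 2^{n-1}$, which would require an edge-isoperimetric inequality on the cube. I would simply remark in passing that in fact $BW(Q_n) = 2^{n-1}$, so that the bound cannot be improved merely by replacing the denominator.
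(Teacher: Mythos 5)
Your proof is correct and takes essentially the same route as the paper: specialize Theorem~\ref{congestion lower bound general} to $H=Q_n$ and use the bisection width of the hypercube, which the paper simply cites as $BW(Q_n)=2^{n-1}$. Your observation that only the easy direction $BW(Q_n)\le 2^{n-1}$, witnessed by the explicit coordinate cut, is needed makes the corollary self-contained without invoking the harder edge-isoperimetric lower bound --- a minor but genuine simplification over citing the exact value.
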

\begin{proof}
  The result follows from Theorem~\ref{congestion lower bound general} and the fact that
  $BW(Q_n)=2^{n-1}$, see~\cite{Stacho1998}.
\end{proof}

The lower bound for the wirelength is the following.
\begin{theorem}\label{wirelength lower bound}
If $G$ is a graph of order $2^n$, then $WL(G, Q_n)\geq n\cdot BW(G)\,.$
\end{theorem}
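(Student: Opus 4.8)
The plan is to mimic the averaging argument from Theorem~\ref{congestion lower bound general}, but to iterate it over all $n$ ``coordinate directions'' of the hypercube rather than using a single bisecting cut. For each $i\in\{1,\dots,n\}$, let $A_i$ be the set of vertices of $Q_n$ whose $i$-th coordinate is $0$ and $B_i$ the set whose $i$-th coordinate is $1$; this is a balanced partition of $V(Q_n)$ into two halves, and the cut $E_{Q_n}(A_i,B_i)$ is a perfect matching of $2^{n-1}$ edges, so in particular $|E_{Q_n}(A_i,B_i)| = 2^{n-1} \geq BW(Q_n)$. Moreover, every edge of $Q_n$ lies in exactly one of these $n$ cuts (the edge flips exactly one coordinate), so the $n$ cuts $E_{Q_n}(A_1,B_1),\dots,E_{Q_n}(A_n,B_n)$ partition $E(Q_n)$.

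First I would fix an optimal embedding $f$ with $WL_f(G,Q_n) = WL(G,Q_n)$. For each $i$, since $f$ maps $f^{-1}(A_i)$ and $f^{-1}(B_i)$ to the two halves of $Q_n$ and $|f^{-1}(A_i)| = |f^{-1}(B_i)| = 2^{n-1}$, the set $E_G(f^{-1}(A_i), f^{-1}(B_i))$ is an edge cut of $G$ with balanced sides, hence has at least $BW(G)$ edges. Each such edge $uv$ has the property that $P_f(uv)$ uses at least one edge of the cut $E_{Q_n}(A_i,B_i)$, so
\begin{equation*}
  BW(G) \leq |E_G(f^{-1}(A_i), f^{-1}(B_i))| \leq \sum_{e\in E_{Q_n}(A_i,B_i)} EC_f(e)\,.
\end{equation*}

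Then I would sum this inequality over $i=1,\dots,n$. On the left we get $n\cdot BW(G)$. On the right, because the cuts $E_{Q_n}(A_1,B_1),\dots,E_{Q_n}(A_n,B_n)$ form a partition of $E(Q_n)$, the double sum collapses to $\sum_{e\in E(Q_n)} EC_f(e)$, which by the identity $WL_f(G,Q_n) = \sum_{e\in E(Q_n)} EC_f(e)$ (the displayed relation between wirelength and edge congestion in Section~\ref{sec:prelim}) equals $WL_f(G,Q_n) = WL(G,Q_n)$. This yields $n\cdot BW(G) \leq WL(G,Q_n)$, as desired.

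The only genuinely delicate point is verifying that each $E_G(f^{-1}(A_i), f^{-1}(B_i))$ really does qualify as a ``bisection'' of $G$ in the sense of the definition — this needs $|A_i| = |B_i| = 2^{n-1}$, which holds exactly because $2^n$ is even and the coordinate partition is perfectly balanced (so the ``differ by at most one'' condition is met with difference $0$). The rest is the bookkeeping that the $n$ directional cuts of $Q_n$ partition its edge set, which is immediate from the definition of adjacency in $Q_n$. I expect no serious obstacle; the argument is a clean averaging/summation over the $n$ canonical cuts.
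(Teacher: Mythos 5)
Your proposal is correct and is essentially the paper's own argument: the paper uses exactly the same partition of $E(Q_n)$ into the $n$ coordinate-direction cuts (denoted $S_i$ there), applies the inequality $BW(G)\leq\sum_{e\in S_i}EC_f(e)$ from the proof of Theorem~\ref{congestion lower bound general} to each cut, and sums over $i$ using $WL_f(G,Q_n)=\sum_{e\in E(Q_n)}EC_f(e)$. The only cosmetic difference is that you re-derive that cut inequality explicitly rather than citing it.
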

\begin{proof}
  Let $E(Q_n)=S_1\cup\dots\cup S_n$ be the partition of $E(Q_n)$ where $S_i$ consists of the edges
  whose incident vertices differ in the $i$th coordinate. Then $Q_n\setminus S_i$ consists of two
  vertex-disjoint $(n-1)$-cubes. Moreover, $ |S_i|=2^{n-1} = BW(Q_n)$, and we can now estimate as
  follows, where the minimum is taken over all embeddings of $G$ into $Q_n$:
  \[ WL(G,Q_n) = \min\limits_f \sum_{e\in E(Q_n)} EC_{f}(e) = \min\limits_f \sum_{i=1}^n \sum_{e\in
      S_i} EC_{f}(e) \stackrel{\eqref{eq:bw_bound}}{\geq} \min\limits_f \sum_{i=1}^n BW(G) = n\cdot BW(G)\,.\]
  The first equality above holds by~\eqref{eq:WL} and~(\ref{eq:WL_by_EC}), and the second equality follows because the sets
  $S_i$ form a partition of $E(Q_n)$.
\end{proof}

\section{Embeddings of complete multipartite graphs}\label{sec:complete-multipartite}

In this section, we consider the complete $p$-partite graph $G = K_{n_1,\dots, n_p}$ of order
$2^n$. Recall that $G$ contains $p$ independent sets containing $n_i$, $i\in [p]$, vertices, and
all possible edges between vertices from different parts.

From Theorem~\ref{thm:maximal_dilation} it follows that $\dil(G,Q_n) = n$ if at least one of the
$p_i$ is odd and $\dil(G,Q_n)\leq n-1$ otherwise (that is, if all the $p_i$ are even). We will next determine the wirelengths of embedding certain complete $p$-partite graphs into $n$-cubes, and start by determining the bisection widths of balanced complete multipartite graphs.
\begin{lemma}\label{lem:bw_multipartite}
  If $G$ is the complete $t$-partite graph $K_{2r,\dots,2r}$, with $2tr$ vertices, $t\geq 2$, $r\geq
  1$, then
  \[BW(G)=r^2t(t-1)\,.\]
\end{lemma}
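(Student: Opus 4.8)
The plan is to prove the equality $BW(K_{2r,\dots,2r}) = r^2 t(t-1)$ by establishing the upper bound via an explicit balanced cut and the matching lower bound by showing that every balanced cut has at least this many edges. Write $G = K_{2r,\dots,2r}$ with parts $V_1,\dots,V_t$, each of size $2r$, so $n(G) = 2rt$, which is even and hence a bisection splits $V(G)$ into two sets of size $rt$ each.

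For the upper bound, I would take the cut that splits each part $V_i$ exactly in half: let $A$ contain $r$ vertices from each $V_i$ and $B$ the remaining $r$ from each $V_i$, so $|A| = |B| = rt$. The edges of $G$ crossing this cut are exactly the edges between $A$ and $B$ that join different parts (edges inside a part do not exist, edges within $A$ or within $B$ do not cross). For a fixed ordered pair of distinct parts $(V_i, V_j)$, the number of crossing edges with one endpoint in $A\cap V_i$ and the other in $B\cap V_j$ is $r\cdot r = r^2$; summing over the $t(t-1)$ ordered pairs but recognising that each unordered crossing edge $uv$ with $u\in V_i$, $v\in V_j$ is counted once (either $u\in A, v\in B$ or $u\in B, v\in A$, not both) gives exactly $r^2 t(t-1)$ crossing edges. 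Hence $BW(G) \le r^2 t(t-1)$.

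For the lower bound, let $A \cup B$ be any partition with $|A| = |B| = rt$. Put $a_i = |A \cap V_i|$ and $b_i = |B \cap V_i| = 2r - a_i$, so $\sum_i a_i = \sum_i b_i = rt$. The number of edges crossing the cut is
\[
|E_G(A,B)| = \sum_{i < j} (a_i b_j + a_j b_i) = \frac{1}{2}\sum_{i \ne j} a_i b_j = \frac{1}{2}\left[\Big(\sum_i a_i\Big)\Big(\sum_j b_j\Big) - \sum_i a_i b_i\right] = \frac{(rt)^2}{2} - \frac{1}{2}\sum_{i=1}^{t} a_i(2r - a_i)\,.
\]
So minimising the cut is equivalent to maximising $\sum_i a_i(2r - a_i)$ subject to $\sum_i a_i = rt$ and $0 \le a_i \le 2r$ with $a_i$ integers. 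The function $a \mapsto a(2r-a)$ is concave with maximum value $r^2$ attained at $a = r$, so $\sum_i a_i(2r-a_i) \le tr^2$, with equality when every $a_i = r$. Substituting back gives $|E_G(A,B)| \ge (rt)^2/2 - tr^2/2 = \tfrac{r^2 t}{2}(t - 1) \cdot \tfrac{2}{?}$; recomputing: $(rt)^2/2 - tr^2/2 = \tfrac{r^2 t^2 - r^2 t}{2} = \tfrac{r^2 t(t-1)}{2}$ — wait, this is off by a factor of two from the upper bound, so I would double-check the counting of crossing edges in the upper-bound construction, which I expect is where the bookkeeping must be done carefully.

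The step I expect to be the main obstacle is reconciling these two computations: the concavity argument is immediate once the crossing-edge count is written correctly, so the real care goes into getting the combinatorial identity $|E_G(A,B)| = \sum_{i<j}(a_i b_j + a_j b_i)$ and its closed form exactly right, including the factor-of-two conventions. Once the expression for $|E_G(A,B)|$ is pinned down and shown to be minimised precisely when $a_i = r$ for all $i$ — which also recovers the balanced cut used in the upper bound — both bounds agree and the lemma follows. Integrality is not an issue here since $r$ is an integer, so $a_i = r$ is always admissible.
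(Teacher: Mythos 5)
Your upper-bound count is correct: with $a_i=r$ for every part, each ordered pair $(V_i,V_j)$ contributes the $r^2$ edges from $A\cap V_i$ to $B\cap V_j$, these edge sets are pairwise disjoint, and summing over the $t(t-1)$ ordered pairs gives exactly $r^2t(t-1)$ crossing edges. The genuine error is in your lower-bound computation, not in the upper bound where you suspected it. The identity
\[
\sum_{i<j}\bigl(a_ib_j+a_jb_i\bigr)=\tfrac12\sum_{i\ne j}a_ib_j
\]
is false: for $i\ne j$ the terms $a_ib_j$ and $a_jb_i$ count \emph{different} edge sets ($A\cap V_i$ to $B\cap V_j$ versus $A\cap V_j$ to $B\cap V_i$), so $\sum_{i<j}(a_ib_j+a_jb_i)$ is already equal to $\sum_{i\ne j}a_ib_j$ with no factor $\tfrac12$. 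The halving would be legitimate only for a symmetric summand such as $a_ia_j$ (which is exactly the situation in the count of edges inside $A$, where $\sum_{i<j}a_ia_j=\tfrac12\bigl((\sum a_i)^2-\sum a_i^2\bigr)$). With the correct identity you get
\[
\lvert E_G(A,B)\rvert=\Bigl(\sum_i a_i\Bigr)\Bigl(\sum_j b_j\Bigr)-\sum_i a_ib_i=(rt)^2-\sum_{i=1}^t a_i(2r-a_i)\geq r^2t^2-tr^2=r^2t(t-1)\,,
\]
using your (correct) concavity bound $a_i(2r-a_i)\le r^2$, and the two bounds now agree. As written, though, your proposal ends with the factor-of-two discrepancy unresolved and misattributed, so it does not yet constitute a proof.

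Once repaired, your route is a minor variant of the paper's: the paper computes $\lvert E_G(A,B)\rvert=\lvert E(G)\rvert-\lvert E_G(A)\rvert-\lvert E_G(B)\rvert$ and bounds the internal edge counts via Cauchy--Schwarz applied to $\sum n_i^2$, whereas you count the cut directly and bound $\sum a_i(2r-a_i)$ by concavity; both reduce to the same quadratic optimisation, so neither buys anything substantial over the other.
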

\begin{proof}
  Let $V(G)=A\cup B$ be a partition with $|A|=|B|=tr$ such that $A$ and $B$ each contains half
  of the vertices in every maximal independent set. Then
  \[BW(G)\leq\left\lvert E_G(A,B)\right\rvert =\lvert E(G)\rvert-\lvert E_G(A)\rvert-\lvert
    E_G(B)\rvert=4r^2\binom{t}{2}-2r^2\binom{t}{2}=r^2t(t-1).\]
  For the reverse inequality, let $V(G)=A\cup B$ be a partition with $|A|=|B|=tr$ and $\lvert
  E_G(A,B)\rvert=BW(G)$. For $i=1,\dots,t$, let $n_i$ be the number of vertices of $A$ in the $i$-th
  independent set. Then we apply the Cauchy-Schwarz inequality to obtain the bound
  \[\lvert E_G(A)\rvert=\sum_{1\leq i<j\leq
      t}n_in_j=\frac12\left(\left(\sum_{i=1}^tn_i\right)^2-\sum_{i=1}^tn_i^2\right)
    =\frac12\left((tr)^2-\sum_{i=1}^tn_i^2\right)\leq\frac{r^2t(t-1)}{2}\,.\]
  For the same reason, $\lvert E_G(B)\rvert\leq r^2t(t-1)/2$, and consequently,
 \[BW(G)=\left\lvert E_G(A,B)\right\rvert =\lvert E(G)\rvert-\left(\lvert E_G(A)\rvert+\lvert
      E_G(B)\rvert\right)
    \geq 4r^2\binom{t}{2}-r^2t(t-1)\\
    =r^2t(t-1)\,.\qedhere\]
\end{proof}
We now want to determine the wirelength for embedding the complete $2^p$-partite graph
$G=K_{2^{n-p},\ldots, 2^{n-p}}$, $1\leq p<n$ into $Q_n$. In the proof of the following theorem, it
will be convenient to have an explicit notation for the function which maps a non-negative integer to
the bitstring corresponding to its binary representation. More specifically, we will use the function $\phi:\nats\to
\{0,1\}^{\nats}$, defined by
\[\phi\left(2^{k-1}x_1+2^{k-2}x_2+2^{k-3}x_3+\dots+2^1x_{k-1}+2^0x_k\right)=x_1x_2\dots x_k.\]
\begin{theorem}\label{thm:embedding_multipartites}
  If $G$ is the complete $2^p$-partite graph $K_{2^{n-p},\ldots, 2^{n-p}}$, where $1 \leq p < n$, then
\[ WL(G,Q_n) = n\cdot 2^{2n-p-2}~(2^{p}-1)\,.\]
\end{theorem}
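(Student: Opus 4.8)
The plan is to prove the theorem by combining the general wirelength lower bound (Theorem~\ref{wirelength lower bound}) with an explicit embedding achieving that bound. First I would compute $BW(G)$ for $G = K_{2^{n-p},\dots,2^{n-p}}$ using Lemma~\ref{lem:bw_multipartite} with $t = 2^p$ and $r = 2^{n-p-1}$, which gives $BW(G) = r^2 t(t-1) = 2^{2n-2p-2}\cdot 2^p(2^p-1) = 2^{2n-p-2}(2^p-1)$. Then Theorem~\ref{wirelength lower bound} yields $WL(G,Q_n) \geq n\cdot BW(G) = n\cdot 2^{2n-p-2}(2^p-1)$, which is exactly the claimed value. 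So the entire content of the theorem reduces to exhibiting an embedding $f:G\to Q_n$ with $WL_f(G,Q_n) = n\cdot 2^{2n-p-2}(2^p-1)$.

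For the upper bound I would describe a natural partition-respecting embedding. Partition $V(Q_n) = \{0,1\}^n$ according to the first $p$ coordinates: vertices sharing the same prefix $s\in\{0,1\}^p$ form a subcube isomorphic to $Q_{n-p}$ of size $2^{n-p}$, and there are exactly $2^p$ such subcubes. Map the $2^p$ independent parts of $G$ bijectively onto these $2^p$ prefix-classes (the assignment within each class being an arbitrary bijection onto that copy of $Q_{n-p}$). With this $f$, every edge of $G$ joins vertices in different prefix-classes, so for each edge $e=uv$ the endpoints $f(u),f(v)$ differ in at least one of the first $p$ coordinates. I would route $P_f(uv)$ along a shortest path in $Q_n$. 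The key computation is then to evaluate $\sum_{e\in E(G)} d_{Q_n}(f(u),f(v))$, which I would do coordinate by coordinate: by the argument in the proof of Theorem~\ref{wirelength lower bound}, for each coordinate direction $i\in[n]$ the contribution of $S_i$ to the wirelength equals the number of edges of $G$ crossing the cut $Q_n\setminus S_i$, i.e. $|E_G(f^{-1}(A_i),f^{-1}(B_i))|$ where $A_i,B_i$ are the two halves of $Q_n$ split on coordinate $i$. One checks that under this embedding $f^{-1}(A_i)$ and $f^{-1}(B_i)$ each contain exactly half of every part of $G$, so $|E_G(f^{-1}(A_i),f^{-1}(B_i))| = BW(G) = 2^{2n-p-2}(2^p-1)$ by the first (equality) computation in the proof of Lemma~\ref{lem:bw_multipartite}. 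Summing over the $n$ coordinates gives $WL_f(G,Q_n) = n\cdot 2^{2n-p-2}(2^p-1)$, matching the lower bound.

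The main obstacle is verifying that the proposed embedding really does split every part of $G$ evenly along every coordinate hyperplane of $Q_n$. For the last $n-p$ coordinates this is immediate since each part maps onto a full copy of $Q_{n-p}$, which is balanced in each of its own coordinate directions. For the first $p$ coordinates it is more delicate: a single part occupies one entire prefix-class, so the cut on coordinate $i\leq p$ puts that whole part on one side — which would make $|E_G(f^{-1}(A_i),f^{-1}(B_i))|$ differ from $BW(G)$ unless the other parts compensate. The resolution is to choose the bijection between parts of $G$ and prefix-classes more carefully, or equivalently to first relabel so that the $2^p$ parts are indexed by $\{0,1\}^p$ and then apply a fixed linear (XOR-type) permutation of $Q_n$ ensuring balance; I would check that such a choice makes each coordinate cut separate the $2^p$ parts into two groups of $2^{p-1}$ parts each, so that $|E_G(f^{-1}(A_i),f^{-1}(B_i))| = (2^{p-1})^2\cdot(2^{n-p})^2 = 2^{2n-p-2}\cdot 2^{p-1}$ wait — more carefully, the count of crossing edges is (number of ordered pairs of parts on opposite sides) times $(2^{n-p})^2$, and one verifies this equals $2^{2n-p-2}(2^p-1)$ by the same Cauchy–Schwarz identity used in Lemma~\ref{lem:bw_multipartite}. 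Once this balancing is secured for all $n$ coordinates, the theorem follows.
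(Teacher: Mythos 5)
Your lower bound is exactly the paper's argument (Lemma~\ref{lem:bw_multipartite} with $t=2^p$, $r=2^{n-p-1}$ plus Theorem~\ref{wirelength lower bound}), and your overall strategy for the upper bound --- find an embedding under which every coordinate cut of $Q_n$ is crossed by exactly $BW(G)$ edges of $G$ --- is the right one. But the concrete embedding you propose does not deliver it, and the patch you sketch cannot be made to work. If each part of $G$ is mapped onto a prefix subcube (all vertices sharing a fixed string in the first $p$ coordinates), then for each of the first $p$ coordinates every part lies entirely on one side of the cut, so the number of crossing edges is $(2^{p-1})^2(2^{n-p})^2=2^{2n-2}$, which is strictly larger than $BW(G)=2^{2n-p-2}(2^p-1)=2^{2n-2}-2^{2n-p-2}$; your own arithmetic at the end (the ``wait'' step) is where this surfaces, and the claim that the Cauchy--Schwarz identity makes it equal to $BW(G)$ is false --- that identity shows a coordinate cut attains $BW(G)$ only when it splits \emph{every} part exactly in half, and it exceeds $BW(G)$ otherwise. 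Consequently your embedding has wirelength $p\cdot2^{2n-2}+(n-p)BW(G)=n\cdot BW(G)+p\cdot2^{2n-p-2}$, missing the target. Reassigning which part goes to which prefix class, or composing with an XOR translation or coordinate permutation (the automorphisms of $Q_n$), cannot help: any embedding whose parts are unions of whole prefix classes still leaves each part entirely on one side of each of those $p$ coordinate cuts.

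What is needed is an assignment in which each part is halved by \emph{every} coordinate cut, and this is where the paper's construction differs from yours: it places the $i$th part on the set of binary strings whose last $p+1$ bits equal a fixed string $s_i$ or its complement $\overline{s_i}$ (in the paper's notation, the integers congruent to $i-1$ or to $-i$ modulo $2^{p+1}$). Since $s_i$ and $\overline{s_i}$ disagree in every one of the last $p+1$ coordinates and the first $n-p-1$ coordinates are free, each coordinate cut splits each part into two halves of size $2^{n-p-1}$; the paper then verifies the value by computing the within-part distance sum $n2^{2(n-p-1)}$ and subtracting from the total distance sum $n2^{2n-2}$ of $Q_n$, which is equivalent to your cut-by-cut accounting. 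To repair your proof you would have to replace the prefix-subcube assignment by such a ``complementary suffix pair'' assignment (or, more generally, map each part onto a coset of an $(n-p)$-dimensional subspace of $\{0,1\}^n$ not contained in any coordinate hyperplane); as written, the upper-bound half of your argument has a genuine gap.
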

\begin{proof}
  Combining Lemma~\ref{lem:bw_multipartite} for $t=2^p$ and $r=2^{n-p-1}$ with Theorem~\ref{wirelength lower bound}, we obtain
  \[ WL(G,Q_n) \geq n\cdot BW(G)=n\cdot 2^{2n-p-2}~(2^{p}-1).\]
  In order to prove the reverse inequality we construct an embedding $f:G\to Q_n$ in the following way. Let
  $V(G)=V_1\cup\dots\cup V_{2^p}$ be the partition of $V(G)$ into $2^p$ independent sets of size
  $2^{n-p}$. Label the vertices of $V(G)$ with the numbers $0,1,2,\dots,2^n-1$, in such a way that
  \begin{equation}\label{eq:V_i}
    V_i =\left\{j2^{p+1}+i-1\,:\,j=0,1,2,\dots,2^{n-p-1}-1\right\}\cup\left\{j2^{p+1}-i\,:\,j=1,2,3,\dots,2^{n-p-1}\right\}.
  \end{equation}
  Recall that $V(Q_n) = \{0,1\}^n$ and define $f:V(G)\to V(Q_n)$ by mapping $x\in V(G)$ to its
  binary representation. In other words, $f$ is the restriction of $\phi$ to the set
  $\{0,1,\dots,2^n-1\}$, that is, $x=2^{n-1}x_1+2^{n-2}x_2+\dots+2^0x_n$ is mapped to
  $f(x) = \phi(x)= x_1 x_2 \dots x_n$. We will now show that for this embedding,
  $WL_f(G,Q_n)=n\cdot 2^{2n-p-2}~(2^{p}-1)$, which concludes the proof. For $x,y\in\{0,1\}^n$, let
  $d(x,y)$ denote their Hamming distance, that is,
  $d(x,y)=\lvert x_1-y_1\rvert+\dots+\lvert x_n-y_n\rvert$, which is equal to the distance between
  $x$ and $y$ in $Q_n$. It follows from~(\ref{eq:V_i}) that, for every $i\in\{1,\dots,2^p\}$,
    \[f(V_i)=\{0,1\}^{n-p-1}\times\{\phi(i-1),\,\phi(2^{p+1}-i)\}.\]
    For instance, for $p=3$ and $n=p+3=6$, 
     \begin{align*}
       f(V_1) &= \left\{
                \begin{matrix}
                  000000,\\ 010000,\\100000,\\ 110000,\\001111,\\011111,\\101111,\\111111\phantom{,}
                \end{matrix}
 \right\}, & f(V_2) &= \left\{
                \begin{matrix}
                  000001,\\ 010001,\\100001,\\ 110001,\\001110,\\011110,\\101110,\\111110\phantom{,}
                \end{matrix}
 \right\}, & f(V_3) &= \left\{
                \begin{matrix}
                  000001,\\ 010010,\\100010,\\ 110010,\\001101,\\011101,\\101101,\\111101\phantom{,}
                \end{matrix}
 \right\}, & \text{etc.}
     \end{align*}
     If $x$ is an arbitrary vertex in $V_i$, and $k\in\{1,\dots,n\}$ is any of the positions, then exactly half of the
     elements of $f(V_i)$ differ from $f(x)$ in position $k$. For $k\in\{1,2,\dots,n-p-1\}$ this is
     because the first $n-p-1$ positions run twice through the vertices of $Q_{n-p-1}$, and for
     $k\in\{n-p,\dots,n\}$ it follows from the fact that $\phi(i-1)$ and $\phi(2^{p+1}-i)$ are
     antipodal vertices in $Q_{p+1}$. As a consequence, for any fixed $x\in V_i$,
     \[\sum_{y\in V_i\setminus x}d(f(x),f(y))=\sum_{k=1}^{n}\frac{1}{2}\left\lvert
         V_i\right\rvert=n2^{n-p-1},\]
     hence
     \[\sum_{\{x,y\}\in\binom{V_i}{2}}d(f(x),f(y))=\frac12\sum_{x\in V_i}\sum_{y\in V_i\setminus
         x}d(f(x),f(y))=\frac12\sum_{x\in V_i}n2^{n-p-1}=n2^{2(n-1-p)}.\]
     As this is true for every $i\in\{1,2,\dots,2^p\}$,
\begin{multline*}
  WL_f(G,Q_n)=\sum_{\{x,y\}\in\binom{V(Q_n)}{2}}d(x,y)-\sum_{i=1}^{2^p}\sum_{\{x,y\}\in\binom{V_i}{2}}d(f(x),f(y))\\
  =n2^{2n-2}-2^pn2^{2(n-1-p)}=n2^{2n-2-p}(2^p-1)\,.\qedhere
\end{multline*}
\end{proof}

\section{Embeddings of folded hypercubes}\label{sec:folded hypercubes}

The $n$-dimensional \emph{folded hypercube} $FQ_{n}$ is the graph obtained from $Q_{n}$ by adding,
for every vertex $x=x_{1}\ldots x_{n}$, the edge between $x$ and its antipodal vertex
$\overline{x}=\overline{x}_{1}\ldots \overline{x}_{n}$, where $\overline{x}_i=1-x_i$,
cf.~\cite{Xu2013}. Folded hypercubes are of wide current interest in reliability and fault tolerance
of interconnection networks, see~\cite{Cheng2018,Kuo2017,Liu2017,Yang2017}. In this section, we add
to the known results on folded hypercubes their wirelength.

\begin{theorem}\label{thm:folded-hypercubes}
  For all $n\ge 2$, $WL(FQ_n,Q_n)= n2^n$.
\end{theorem}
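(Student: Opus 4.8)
The plan is to prove $WL(FQ_n, Q_n) = n2^n$ by matching a lower bound from the bisection-width machinery of Section~\ref{sec:two-bounds} with an explicit embedding. For the lower bound, note that $FQ_n$ has order $2^n$, so Theorem~\ref{wirelength lower bound} gives $WL(FQ_n, Q_n) \ge n \cdot BW(FQ_n)$; hence it suffices to show $BW(FQ_n) = 2^n$. One direction is easy: the natural partition of $V(Q_n)$ according to the first coordinate splits $V(FQ_n)$ into two halves, cutting the $2^{n-1}$ edges of $S_1$ plus all $2^{n-1}$ ``folding'' edges $x\overline{x}$ (each of which crosses, since $x$ and $\overline{x}$ differ in every coordinate), for a total of $2^n$ cut edges; thus $BW(FQ_n) \le 2^n$. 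For the reverse inequality $BW(FQ_n) \ge 2^n$, I would use the known isoperimetric bound for the hypercube together with the fact that every folding edge joins antipodal vertices: for any balanced partition $A \cup B$, the edges of $Q_n$ crossing the cut already number at least $BW(Q_n) = 2^{n-1}$, and among the $2^{n-1}$ antipodal pairs at least half... actually the cleanest route is to observe that the map $x \mapsto \overline{x}$ is an automorphism of $FQ_n$ sending $A$ to some set $\overline{A}$ of the same size; counting folding edges inside $A$ versus across requires a symmetry/counting argument. I expect this to need a short lemma of the form $BW(FQ_n) = BW(Q_n) + 2^{n-1}$, proved by combining the $Q_n$ isoperimetric inequality with the observation that a balanced set $A$ and its complement each contain exactly half of the antipodal pairs only in the extremal case.

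For the matching upper bound, the plan is to exhibit an embedding $f : FQ_n \to Q_n$ and compute its wirelength directly. Take $f$ to be the identity on $V(Q_n) = \{0,1\}^n$. Every edge of $Q_n \subseteq FQ_n$ then has dilation $1$, contributing $|E(Q_n)| = n2^{n-1}$ to the wirelength. Each folding edge $x\overline{x}$ must be routed along a shortest $x$–$\overline{x}$ path in $Q_n$, which has length $n$; there are $2^{n-1}$ folding edges, contributing $n \cdot 2^{n-1}$. Summing, $WL_f(FQ_n, Q_n) = n2^{n-1} + n2^{n-1} = n2^n$, so $WL(FQ_n, Q_n) \le n2^n$. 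Together with the lower bound this yields equality.

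The main obstacle is the lower bound $BW(FQ_n) \ge 2^n$: the easy halving argument only gives the upper estimate on $BW$, and one must rule out that some cleverer balanced cut of $FQ_n$ uses fewer than $2^n$ edges — i.e., that routing ``savings'' on the hypercube edges could more than compensate for the folding edges forced across. The resolution is that the hypercube's edge-isoperimetric inequality already forces at least $2^{n-1}$ hypercube edges across any balanced cut, and a parity/pairing argument on the $2^{n-1}$ antipodal pairs forces at least $2^{n-1}$ folding edges across as well (each pair lies entirely in $A$, entirely in $B$, or is split; a set $A$ with $|A| = 2^{n-1}$ that contains $k$ full antipodal pairs contains $2^{n-1} - 2k$ split ones, and the complement contains $2^{n-2}\text{-ish}$ — the point is that the number of split pairs is $2^{n-1} - 2k \ge $ the relevant bound unless $A$ is a union of full pairs, in which case the hypercube-edge count compensates). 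Making this trade-off precise is the one place real work is needed; everything else is bookkeeping.
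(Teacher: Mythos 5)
Your upper bound is exactly the paper's argument (identity embedding; $n2^{n-1}$ from the hypercube edges plus $n\cdot 2^{n-1}$ from the $2^{n-1}$ folding edges), and your reduction of the lower bound to $WL(FQ_n,Q_n)\ge n\cdot BW(FQ_n)$ via Theorem~\ref{wirelength lower bound} is also the paper's route. The genuine gap is the inequality $BW(FQ_n)\ge 2^n$, which you explicitly leave open: the paper does not prove it either, but simply cites the known result $BW(FQ_n)=2^n$ from the literature (Rajasingh et al.), whereas your sketch attempts a proof and the attempt as written is flawed. In particular, the claim that a ``parity/pairing argument on the $2^{n-1}$ antipodal pairs forces at least $2^{n-1}$ folding edges across'' every balanced cut is false: if $A$ is any balanced set closed under complementation (for instance $A=\{000,111,001,110\}$ in $FQ_3$), then \emph{no} folding edge crosses the cut at all, and the required $2^n$ cut edges must come entirely from hypercube edges.

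So the real content is a trade-off, not a sum of two separate bounds: Harper's edge-isoperimetric inequality gives only $2^{n-1}$ crossing hypercube edges for a balanced set, with equality exactly for codimension-one subcubes, and such subcubes have all $2^{n-1}$ folding edges crossing; conversely, sets that avoid crossing folding edges are far from subcubes and must pay extra in hypercube edges. Quantifying ``far from a subcube costs at least $2^{n-1}$ extra boundary edges'' requires a stability-type strengthening of the isoperimetric inequality (or some other dedicated argument), and nothing in your proposal supplies it. To make your write-up correct you should either prove such a lemma or do what the paper does and invoke the published value of $BW(FQ_n)$; as it stands, the lower bound half of your proof is incomplete.
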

\begin{proof}
  In~\cite{Rajasingh2011} it was proved that $BW(FQ_n)=2^{n}$, and with Theorem~\ref{wirelength
    lower bound} this implies $WL(FQ_n,Q_n)\geq n2^n$.  To prove the reverse inequality, let
  ${\rm id}:FQ_n\to Q_n$ be the identity embedding, that is, ${\rm id}(v) = v$, for every
  $v\in V(FQ_n)$ (and where the second $v$ is considered as a vertex of $Q_n$). For this embedding,
  the $n2^{n-1}$ edges of $FQ_n$ that are also the edges of $Q_n$ contribute $n2^{n-1}$ to the
  wirelength. and each of the $2^{n-1}$ antipodal edges contributes $n$ to the wirelength. We
  conclude
  \[WL_{{\rm id}}(FQ_n,Q_n) = n2^{n-1} + 2^{n-1} n = n\cdot 2^n\,.\qedhere\]
\end{proof}

\section{Embeddings of wheels}\label{sec:wheels}
Let $n\ge 4$. The {\em wheel} $W_n$ of order $n$ is the graph obtained from a cycle $C_{n-1}$ by
adding a new vertex $x$ and joining $x$ to all the vertices of the cycle. Vertex $x$ is called the
\emph{hub} of the wheel and the edges incident with $x$ are \emph{spokes} of the wheel. Since the
hub vertex is adjacent to all other vertices, the wheel does not contain an anti-matching, and by
Theorem~\ref{thm:maximal_dilation}, $\dil(W_{2^n}, Q_n) = n$. We next determine the wirelength
of embedding $W_{2^n}$ into $Q_n$.
\begin{theorem}\label{thm:wheel_wirelength}
If $n\ge 2$, then $WL(W_{2^n},Q_n)= (n+2)2^{n-1}$.
\end{theorem}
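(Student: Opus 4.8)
The plan is to prove the lower bound via Theorem~\ref{wirelength lower bound}, i.e.\ by computing $BW(W_{2^n})$, and then to match it with an explicit embedding. First I would determine $BW(W_{2^n})$. A balanced cut splits the $2^n$ vertices into two sets of size $2^{n-1}$. The hub $x$ lies on one side, contributing a spoke to the cut for each of its $2^{n-1}$ neighbours on the other side; the remaining cut edges come from the rim cycle $C_{2^n-1}$. Placing the hub in, say, $A$ with $|A|=2^{n-1}$, the hub has $2^{n-1}$ spokes crossing to $B$ and $2^{n-1}-1$ spokes staying inside $A$. If the $2^{n-1}-1$ non-hub vertices of $A$ form a contiguous arc on the rim, the cycle contributes exactly $2$ crossing edges, giving $|E(A,B)| = 2^{n-1}+2$. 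For the reverse inequality, note any balanced cut has the hub on one side with exactly $2^{n-1}$ spokes crossing, and the rim cycle restricted to a balanced-ish split contributes at least $2$ crossing edges (a cycle cut has even, hence $\ge 2$, crossing edges, and the split of the rim is nonempty on both sides since $2^{n-1}-1\ge 1$ and $2^{n-1}\ge 1$). Hence $BW(W_{2^n}) = 2^{n-1}+2$, and Theorem~\ref{wirelength lower bound} gives $WL(W_{2^n},Q_n)\ge n(2^{n-1}+2) = (n+2)2^{n-1}$... wait, that is not equal: $n\cdot 2^{n-1} + 2n$ versus $(n+2)2^{n-1} = n2^{n-1}+2^n$. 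So the bisection bound alone is \emph{not} tight, and I must be more careful.

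So the main obstacle is that Theorem~\ref{wirelength lower bound} undershoots: it only ``sees'' the $2$ rim edges once per coordinate, but in fact the rim cycle $C_{2^n-1}$ has odd length and cannot embed into the bipartite graph $Q_n$ without some edges having dilation $\ge 3$, forcing extra wirelength. The fix is to argue the lower bound coordinatewise but more sharply: for each coordinate $i$, the hypercube $Q_n$ splits into two halves along $S_i$, and among the $2^n$ vertices each half gets $2^{n-1}$. The edges of $W_{2^n}$ crossing $S_i$ number at least $BW$-many only when the split is balanced, but actually I should sum a sharper per-coordinate quantity. A cleaner route: use that $WL_f(G,Q_n) = \sum_{i=1}^n EC_f(S_i)$-type decomposition is an equality only up to the shortest-path choice; instead I would directly lower-bound $WL$ by $\sum_i |E_G(A_i,B_i)|$ where $(A_i,B_i)$ is the bipartition of $V(Q_n)$ by the $i$-th coordinate \emph{pulled back} through $f$, and then observe that summing over $i$ counts each guest edge $uv$ exactly $d(f(u),f(v))$ times, recovering $WL_f$. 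Each term is $\ge$ the minimum of $|E_G(A,B)|$ over \emph{all} partitions with $|A|=|B|=2^{n-1}$ that respect the constraint that $A,B$ are themselves the vertex sets of $(n-1)$-cubes --- but the relevant constraint that makes this bigger than $BW$ is the parity obstruction from the odd rim cycle, handled in the next step.

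The key combinatorial lemma I would isolate: for \emph{every} balanced partition $V(W_{2^n}) = A\cup B$, one has $|E(A,B)|\ge 2^{n-1}+2$, and moreover $\sum_{i=1}^n |E(A_i,B_i)| \ge (n+2)2^{n-1}$ for the $n$ coordinate-partitions coming from any embedding into $Q_n$ --- the extra additive $2^n - 2n$ over $n\cdot BW$ comes from the fact that the odd cycle $C_{2^n-1}$ must have total edge-dilation at least $2^n$ (since it has $2^n-1$ edges and is non-bipartite, an Euler-type/parity count in $Q_n$ forces the sum of Hamming distances along the cycle to be even, hence $\ge 2^n$, strictly more than its length). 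Adding the spoke contribution (each of the $2^{n-1}$ spokes has dilation $\ge 1$, and balanced-cut considerations give the rest) yields the bound. For the matching upper bound I would use an explicit $f$: embed the rim $C_{2^n-1}$ into $Q_n$ along a near-Hamiltonian path given by a Gray-code ordering so that $2^n-2$ rim edges have dilation $1$ and the one ``closing'' edge has dilation $3$ (total rim wirelength $2^n-2+3 = 2^n+1$... I'd tune this to land at $2^n$), place the hub at the vertex minimizing total distance, and compute that the $2^{n-1}$ spokes contribute exactly $\sum_{v} d(x,v)$ over half the cube $= n2^{n-1}$; summing gives $(n+2)2^{n-1}$. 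The hard part will be the sharp lower bound, specifically proving the ``$+2^n$ instead of $+2n$'' improvement over the naive bisection bound via the non-bipartiteness of the rim cycle; the upper bound is a routine Gray-code construction once the target value is pinned down.
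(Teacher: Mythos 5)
Your eventual route is the paper's own: the spokes contribute a fixed $n2^{n-1}$, the odd rim cycle $C_{2^n-1}$ must have total dilation at least $2^n$ because $Q_n$ is bipartite (a closed walk's Hamming-distance sum is even, so it cannot equal the odd number $2^n-1$), and a Gray-code embedding attains the bound. You also correctly diagnosed that the bisection-width bound $n\cdot BW(W_{2^n})=n2^{n-1}+2n$ undershoots, which is exactly why the paper does not use Theorem~\ref{wirelength lower bound} here. So the strategy matches; the problems are in the execution of the two load-bearing steps.

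First, the spokes. $W_{2^n}$ has $2^n-1$ spokes, not $2^{n-1}$, and neither ``each of the $2^{n-1}$ spokes has dilation $\ge 1$, and balanced-cut considerations give the rest'' nor ``$\sum_v d(x,v)$ over half the cube'' establishes the needed spoke contribution. The clean fact is: since the hub is adjacent to \emph{all} other vertices and $f$ is a bijection of the $2^n$ wheel vertices onto $V(Q_n)$, the spoke dilations sum to the total distance from $f(\mathrm{hub})$ to every other vertex of $Q_n$, namely $\sum_{k=1}^{n}k\binom{n}{k}=n2^{n-1}$, \emph{for every embedding}; this single observation gives both the lower and the upper spoke count and replaces your vague balanced-cut step. (If you prefer your coordinatewise decomposition: exactly $2^{n-1}$ spokes cross each coordinate cut, since the half of $Q_n$ not containing $f(\mathrm{hub})$ consists of $2^{n-1}$ images of rim vertices, all adjacent to the hub; summing over the $n$ coordinates again gives exactly $n2^{n-1}$.) Second, the upper bound as sketched cannot work: a closing rim edge of dilation $3$ with all other rim edges of dilation $1$ gives rim total $2^n+1$, which is odd and thus contradicts your own parity argument, besides overshooting the target. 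The correct tuning is to take a Gray-code Hamiltonian cycle of $Q_n$, place the hub at one of its $2^n$ vertices and the rim vertices consecutively along the rest; then $2^n-2$ rim edges have dilation $1$ and the closing rim edge joins the two neighbours of the hub's position on the Hamiltonian cycle, hence has dilation exactly $2$, so the rim contributes $2^n$ and the total is $n2^{n-1}+2^n=(n+2)2^{n-1}$ as required.
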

\begin{proof}
  For every embedding the spokes contribute
  \[\sum_{k=1}^nk\binom{n}{k}=n2^{n-1}\]
  to the wirelength. Since $Q_n$ is bipartite, the $2^n-1$ cycle edges contribute at least
  $2^n$. This gives a lower bound of $n2^{n-1}+2^n=(n+2)2^{n-1}$, and using a Gray code this bound
  can be achieved.
\end{proof}

\section{Embeddings of some Cartesian products}\label{sec:cartesian}
Recall that the {\em Cartesian product} $G\cp H$ of graphs $G$ and $H$ is the graph with the vertex
set $V(G) \times V(H)$, vertices $(g,h)$ and $(g',h')$ being adjacent if either $g=g'$ and
$hh'\in E(H)$, or $h=h'$ and $gg'\in E(G)$. For more information on this product see the
book~\cite{Imrich2008}. We will use the following result on the edge-isoperimetric problem for Cartesian products of complete graphs (see~\cite{Bezrukov1999} for a survey of related results).
\begin{theorem}\label{thm:lindsey}
{\rm (\cite{JohnH.Lindsey1964})}
  Let $G=K_{p_1}\cp K_{p_2}\cp\cdots\cp K_{p_t}$ with $p_1\leq p_2\leq\dots\leq p_t$, and let $m$ be
  an integer with $1\leq m\leq p_1p_2\cdots p_t$. If $H$ is a subgraph of $G$ with $\lvert
  V(H)\rvert=m$, then $\lvert E(H)\rvert\leq\lvert E(H^*)\rvert$, where $H^*$ is the subgraph
  induced by the first $m$ vertices in lexicographic order, where we take
  \[V(G)=\left\{(a_1,a_2,\dots,a_t)\,:\,1\leq a_i\leq p_i\text{ for }i=1,2\dots,t\right\},\]
  and the lexicographic order is defined by $(a_1,a_2,\dots,a_t)<(b_1,b_2,\dots,b_t)$ if there
  exists an index $i$ with $a_i<b_i$ and $a_j=b_j$ for all $j<i$.
\end{theorem}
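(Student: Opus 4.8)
\emph{Proof idea.} This is an edge-isoperimetric inequality, so the natural tool is a compression (shifting) argument, organised as an induction on the number $t$ of factors; I would strengthen the inductive hypothesis so that it also controls the quantity $g(k)=|E(G[\{\text{first }k\text{ vertices}\}])|$. The base case $t=1$ is immediate since $K_{p_1}$ restricted to any $m$ vertices has the maximum possible $\binom{m}{2}$ edges. For the inductive step write $G=K_{p_1}\cp G'$ with $G'=K_{p_2}\cp\cdots\cp K_{p_t}$ and $N=|V(G')|=p_2\cdots p_t$, where it is essential that $p_1$ is the \emph{smallest} of the parameters. Any $S\subseteq V(G)$ with $|S|=m$ splits into slices $S_a=\{v\in V(G'):(a,v)\in S\}$, $a\in[p_1]$, and one has the identity
\[
 |E(G[S])|=\sum_{a=1}^{p_1}|E(G'[S_a])|+\sum_{1\le a<b\le p_1}|S_a\cap S_b|.
\]

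Applying the inductive hypothesis to $G'$, we may replace each slice $S_a$ by the initial segment of $V(G')$ of the same size without decreasing the first sum; since a family of initial segments is nested, the second sum then becomes $\sum_{a<b}\min(|S_a|,|S_b|)$, which is also the largest it can be for the given slice sizes. So everything reduces to maximising
\[
 F(m_1,\dots,m_{p_1})=\sum_{a=1}^{p_1}g(m_a)+\sum_{1\le a<b\le p_1}\min(m_a,m_b)
\]
over integers $0\le m_a\le N$ with $m_1+\cdots+m_{p_1}=m$, and to verifying that the maximiser is the staircase profile $(N,\dots,N,s,0,\dots,0)$ with $\lfloor m/N\rfloor$ entries equal to $N$ and $s=m-N\lfloor m/N\rfloor$ — which is exactly the slice profile of the lexicographic initial segment $H^*$.

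To do the optimisation I would sort $m_1\ge\cdots\ge m_{p_1}$ (both terms of $F$ are symmetric) and show that if the profile is not a staircase then there are indices $i<j$ with $0<m_j\le m_i<N$ and the polarising move $m_i\mapsto m_i+1$, $m_j\mapsto m_j-1$ does not decrease $F$; iterating this move terminates at the staircase. A short computation gives
\[
 \Delta F=\bigl(g(m_i+1)-g(m_i)\bigr)-\bigl(g(m_j)-g(m_j-1)\bigr)-1-\bigl|\{c\neq i,j:\ m_j\le m_c\le m_i\}\bigr|,
\]
so the argument hinges on a discrete convexity/super-additivity property of $g$: the successive differences $g(k)-g(k-1)$ should be non-decreasing and, more than that, increase fast enough to absorb the last two terms. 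This is precisely where the hypothesis $p_1\le p_2\le\cdots\le p_t$ is used — with the factors in the wrong order the staircase profile need not be optimal; for instance, for $K_3\cp K_2$ the lexicographic initial segment of size three has only $2$ edges, whereas a $K_3$-fibre on three vertices has $3$.

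The main obstacle is exactly this convexity lemma for $g$, which I would prove by a nested induction exploiting $p_1\le N^{1/(t-1)}$; isolating the precise statement that both implies optimality of the staircase and survives the induction is the delicate point. An alternative route avoids slicing altogether: compress $S$ in all $t$ coordinate directions until a fixed point is reached, noting that a single compression never decreases $|E(G[S])|$ because it preserves the edges inside lines in the compressed direction and can only increase the overlaps of parallel such lines (a potential argument via the sum of coordinates guarantees termination). The difficulty that remains on this route is that fully compressed sets are not unique, so one still needs a concluding counting step to identify their common edge count with $|E(H^*)|$.
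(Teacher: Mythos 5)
This statement is not proved in the paper at all: it is quoted as a known result of Lindsey \cite{JohnH.Lindsey1964} and used as a black box, so there is no in-paper argument to measure your sketch against; it has to stand on its own as a proof of Lindsey's theorem.

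As such it is a reasonable outline of the standard compression strategy, and several pieces are correct: the slicing identity $|E(G[S])|=\sum_a|E(G'[S_a])|+\sum_{a<b}|S_a\cap S_b|$, the replacement of each slice by an initial segment of $V(G')$ (which can only help both sums, since initial segments are nested), the reduction to maximising $F(m_1,\dots,m_{p_1})$ over slice-size profiles, and the observation (with a correct $K_3\cp K_2$ example) that the ordering hypothesis is essential. But the step that carries all the weight --- showing the staircase profile maximises $F$ --- is exactly where the argument is not a proof, and the route you indicate for it fails. Your exchange inequality needs $\bigl(g(m_i+1)-g(m_i)\bigr)-\bigl(g(m_j)-g(m_j-1)\bigr)$ to dominate $1+\#\{c:\ m_j\le m_c\le m_i\}$, and you propose to get this from discrete convexity of $g$ (non-decreasing increments). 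That property is simply false for lexicographic segments in products of cliques, even with $p_2\le\dots\le p_t$: for $G'=K_2\cp K_3$ the increments $g(k)-g(k-1)$ are $0,1,2,1,2,3$, and for $G'=K_2\cp K_2\cp K_2$ (the $3$-cube under binary order) they are $0,1,1,2,1,2,2,3$ --- in both cases the increment drops. So no convexity lemma of the kind you describe can exist, and the claim that the hypothesis $p_1\le\dots\le p_t$ rescues it is mistaken; whatever inequality makes the staircase optimal must be chosen and proved much more carefully (this is the actual content of Lindsey's theorem, and the reason the known proofs are delicate). Your alternative route (compressing in all $t$ directions) has the analogous hole, which you acknowledge: compressed sets are far from unique in Hamming graphs, so the concluding identification with the lexicographic segment is again the whole difficulty. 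In short, the reduction is fine, but the essential lemma is unproved and the specific justification offered for it is false, so the proposal does not yet constitute a proof.
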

As a corollary, we obtain the bisection widths for products of complete graphs.
\begin{corollary}\label{cor:bw_clique_products}
  If $G = K_{2p_1}\cp K_{p_2}\cp K_{p_3}\cp\cdots\cp K_{p_t}$ with $2p_1\leq p_2\leq
p_3\leq\dots\leq p_t$, then $BW(G)=p_1^2p_2\cdots p_t$.
\end{corollary}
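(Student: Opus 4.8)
The plan is to reduce the bisection width to the edge-isoperimetric quantity controlled by Theorem~\ref{thm:lindsey} and then carry out a short edge count. Since $G$ has $2p_1p_2\cdots p_t$ vertices, an even number, every bisection partitions $V(G)$ into sets $A,B$ with $|A| = |B| = N$, where $N := p_1p_2\cdots p_t$; and since each edge of $G$ lies inside $A$, inside $B$, or between them,
\[
  BW(G) = |E(G)| - \max\bigl\{\,|E_G(A)| + |E_G(B)| \ :\ |A| = |B| = N\,\bigr\}.
\]
So the task is to determine the maximum of $|E_G(A)| + |E_G(B)|$ over balanced partitions.

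First I would apply Theorem~\ref{thm:lindsey} to $G = K_{2p_1}\cp K_{p_2}\cp\cdots\cp K_{p_t}$; the hypothesis $2p_1 \le p_2 \le \cdots \le p_t$ of the corollary is precisely what makes the factors nondecreasing, as the theorem requires. Taking $m = N$: there are $p_2p_3\cdots p_t = N/p_1$ vertices with any fixed first coordinate, so the lexicographically first $N$ vertices are exactly the $(a_1,\dots,a_t)$ with $a_1 \in \{1,\dots,p_1\}$, and these induce a copy of $H := K_{p_1}\cp K_{p_2}\cp\cdots\cp K_{p_t}$. Thus Theorem~\ref{thm:lindsey} gives $|E_G(A)| \le |E(H)|$ and $|E_G(B)| \le |E(H)|$ for every balanced partition, hence the maximum above is at most $2|E(H)|$. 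It is attained by the bisection $A = \{(a_1,\dots,a_t) : a_1 \le p_1\}$, $B = \{(a_1,\dots,a_t) : a_1 > p_1\}$, because any two distinct values within $\{1,\dots,p_1\}$ (and any two within $\{p_1+1,\dots,2p_1\}$) are adjacent in $K_{2p_1}$, so both $G[A]$ and $G[B]$ are isomorphic to $H$. Therefore $BW(G) = |E(G)| - 2|E(H)|$.

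It then remains to substitute into the standard edge count for a Cartesian product of complete graphs,
\[
  |E(K_{q_1}\cp\cdots\cp K_{q_t})| = \tfrac12\Bigl(\textstyle\prod_{j=1}^t q_j\Bigr)\sum_{i=1}^t (q_i - 1),
\]
with $(q_1,\dots,q_t) = (2p_1,p_2,\dots,p_t)$ for $G$ (so $\prod_j q_j = 2N$) and $(q_1,\dots,q_t) = (p_1,p_2,\dots,p_t)$ for $H$ (so $\prod_j q_j = N$). Forming $|E(G)| - 2|E(H)|$, the sums $\sum_{i\ge 2}(p_i-1)$ cancel, leaving $BW(G) = N\bigl((2p_1-1) - (p_1-1)\bigr) = Np_1 = p_1^2p_2\cdots p_t$, as claimed. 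I do not expect a real obstacle once Theorem~\ref{thm:lindsey} is in hand; the only point needing care is the verification that the lexicographically first $N$ vertices induce $K_{p_1}\cp\cdots\cp K_{p_t}$ and that the complementary $N$ vertices induce an isomorphic copy, so that the Lindsey bound is genuinely achieved by a bisection rather than merely approached.
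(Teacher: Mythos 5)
Your proposal is correct and follows essentially the same route as the paper: apply Theorem~\ref{thm:lindsey} with $m=p_1p_2\cdots p_t$ to bound $\lvert E_G(A)\rvert$ and $\lvert E_G(B)\rvert$ for any balanced partition, note that the bound (the edge count of $K_{p_1}\cp K_{p_2}\cp\cdots\cp K_{p_t}$) is attained by splitting on the first coordinate, and then subtract from $\lvert E(G)\rvert$. The only cosmetic difference is that you phrase the extremal count as $2\lvert E(H)\rvert$ for the induced subproduct $H$, while the paper writes the same quantity explicitly as $p_1p_2\cdots p_t\left(p_1+p_2+\dots+p_t-t\right)$.
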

\begin{proof}
  We have $\lvert E(G)\rvert=p_1p_2\cdots p_t\left(2p_1+p_2+p_3+\dots+p_t-t\right)$, and for any
  $A\subseteq V(G)$ with $\lvert A\rvert=p_1p_2\cdots p_t$, Theorem~\ref{thm:lindsey} implies
  \[\lvert E_G(A)\rvert\leq\frac12p_1p_2\cdots p_t\left(p_1+p_2+\dots+p_t-t\right)\]
  with equality if $A$ is the set of vertices $(a_1,\dots,a_t)$ with $a_1\leq p_1$ or the set of
  vertices $(a_1,\dots,a_t)$ with $a_1>p_1$. This implies that for every partition $V(G)=A\cup B$
  with $\lvert A\rvert=\lvert B\rvert=p_1p_2\cdots p_t$,
  \begin{multline*}
    \lvert E_G(A,B)\rvert=\lvert E(G)\rvert-\lvert E_G(A)\rvert-\lvert E_G(B)\rvert\\
    \geq p_1p_2\cdots p_t\left(2p_1+p_2+p_3+\dots+p_t-t\right)-p_1p_2\cdots
    p_t\left(p_1+p_2+\dots+p_t-t\right) =p_1^2p_2\cdots p_t
  \end{multline*}
  with equality if $A$ is the set of vertices $(a_1,\dots,a_t)$ with $a_1\leq p_1$.
\end{proof}

\begin{theorem}\label{thm:cartesian_prod_wirelength}
  Let $n$ be a positive even integer, and let $G=K_{2^{n/2}}\cp K_{2^{n/2}}$. Then
  \[WL(G,Q_{n})= n2^{3n/2-2}\,.\]
\end{theorem}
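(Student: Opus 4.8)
The plan is to squeeze $WL(G,Q_n)$ between two matching bounds. Since $G$ has order $2^{n/2}\cdot 2^{n/2}=2^n$, Theorem~\ref{wirelength lower bound} applies and gives $WL(G,Q_n)\geq n\cdot BW(G)$, so the first task is to pin down $BW(G)$. I would apply Corollary~\ref{cor:bw_clique_products} with two factors, writing $G=K_{2p_1}\cp K_{p_2}$ where $p_1=2^{n/2-1}$ and $p_2=2^{n/2}$; the hypothesis $2p_1\leq p_2$ holds with equality, so the corollary yields $BW(G)=p_1^2p_2=2^{n-2}\cdot 2^{n/2}=2^{3n/2-2}$. Hence $WL(G,Q_n)\geq n\cdot 2^{3n/2-2}$.

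For the reverse inequality I would exhibit an embedding meeting this bound, exploiting the factorization $Q_n=Q_{n/2}\cp Q_{n/2}$. Fix any bijection $g\colon V(K_{2^{n/2}})\to V(Q_{n/2})$ (any bijection works, since $K_{2^{n/2}}$ has all edges) and define $f\colon V(G)\to V(Q_n)$ by $f(u,v)=(g(u),g(v))$; this is a bijection. Taking each $P_f(e)$ to be a shortest path, the contribution of $e$ to the wirelength is the $Q_n$-distance between the images of its endpoints. Every edge of $G$ has the form $(u,v)(u',v)$ with $u\neq u'$ or $(u,v)(u,v')$ with $v\neq v'$; in the first case the image distance is $d_{Q_{n/2}}(g(u),g(u'))$ and in the second it is $d_{Q_{n/2}}(g(v),g(v'))$. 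Grouping the edges of the first kind by the fixed coordinate $v$, each of the $2^{n/2}$ groups contributes $\sum_{\{x,y\}\in\binom{V(Q_{n/2})}{2}}d_{Q_{n/2}}(x,y)$, and likewise for the second kind.

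So the only quantity left to evaluate is $S:=\sum_{\{x,y\}\in\binom{V(Q_m)}{2}}d_{Q_m}(x,y)$ for $m=n/2$, i.e.\ the Wiener index of $Q_m$. Counting, for each of the $m$ coordinates, the pairs of vertices that differ in that coordinate gives $S=m\cdot 2^{m-1}\cdot 2^{m-1}=m\,2^{2m-2}$. Plugging in, $WL_f(G,Q_n)=2\cdot 2^{n/2}\cdot S = 2\cdot 2^{n/2}\cdot \tfrac n2\,2^{n-2}=n\,2^{3n/2-2}$, which matches the lower bound and finishes the proof.

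I do not anticipate a genuine obstacle here: the argument is just the bisection-width lower bound of Section~\ref{sec:two-bounds}, Lindsey's edge-isoperimetric theorem as packaged in Corollary~\ref{cor:bw_clique_products}, and a routine product construction. The one point that needs care is the verification that Corollary~\ref{cor:bw_clique_products} may legitimately be invoked in the boundary case $2p_1=p_2$, and --- as always with wirelength --- that the lower and upper bounds agree exactly rather than up to a constant factor; here the arithmetic does close up.
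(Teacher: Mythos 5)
Your proof is correct. The lower bound is exactly the paper's: Corollary~\ref{cor:bw_clique_products} with $p_1=2^{n/2-1}$, $p_2=2^{n/2}$ (the boundary case $2p_1=p_2$ is indeed allowed by its hypothesis) gives $BW(G)=2^{3n/2-2}$, and Theorem~\ref{wirelength lower bound} does the rest. For the upper bound you use essentially the same embedding as the paper --- the paper maps $(a,b)$ to the concatenation of the binary strings of $a$ and $b$, which is the special case of your $f(u,v)=(g(u),g(v))$ with $g$ the binary-representation bijection --- but you evaluate its wirelength differently. The paper decomposes $E(Q_n)$ into the $n$ coordinate cuts, notes that each half of each cut pulls back to a copy of $K_{2^{n/2-1}}\cp K_{2^{n/2}}$, and invokes the equality case of Corollary~\ref{cor:bw_clique_products} to see that every coordinate cut carries exactly $BW(G)$ edges of $G$, so the wirelength is $n\cdot BW(G)$. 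You instead sum distances directly: every edge of $G$ lies in a $K_{2^{n/2}}$-fiber, each fiber contributes the Wiener index $S=\frac n2\,2^{n-2}$ of $Q_{n/2}$, and $2\cdot 2^{n/2}\cdot S=n2^{3n/2-2}$; your count of $S$ (coordinatewise, $2^{n/2-1}\cdot 2^{n/2-1}$ pairs per coordinate) is correct. Your evaluation is more elementary and self-contained (it avoids reusing Lindsey's theorem for the upper bound and shows that any bijection per factor works), while the paper's evaluation makes visible the structural fact that the chosen embedding makes every dimension cut of $Q_n$ an exact bisection of $G$, mirroring the lower-bound argument cut by cut.
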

\begin{proof}
  From Corollary~\ref{cor:bw_clique_products} with $t=2$, $p_1=2^{n/2-1}$ and $p_2=2^{n/2}$, we obtain
  $BW(G)=2^{3n/2-2}$, and together with Theorem~\ref{wirelength lower bound} this implies the lower
  bound. To prove the upper bound we embed $G$ into $Q_n$, by mapping a vertex $(a,b)\in
  V(G)=\{0,1,\dots,2^{n/2}-1\}\times\{0,1,\dots,2^{n/2}-1\}$ with
  \begin{align*}
    a &= \sum_{i=1}^{n/2}a_i2^{n/2-i},& b &= \sum_{i=1}^{n/2}b_i2^{n/2-i}
  \end{align*}
  to the binary string $a_1a_2\ldots a_{n/2}b_1b_2\ldots b_{n/2}\in\{0,1\}^n=V(Q_n)$. For
  $i\in\{1,\dots,n\}$, let $V(Q_n)=A_i\cup B_i$ be the partition with
  $A_i=\{x\in V(Q_n)\,:\,x_i=0\}$ and $B_i=\{x\in V(Q_n)\,:\,x_i=1\}$. Then
  \[E(Q_n)=\bigcup_{i=1}^nE_{Q_n}(A_i,B_i)\]
  and since each of the sets $f^{-1}(A_i)$ and $f^{-1}(B_i)$ induces a subgraph of $G$ isomorphic to
  $K_{2^{n/2-1}}\cp K_{2^{n/2}}$, we obtain
  \begin{multline*}
    WL_f(G,Q_n)=\sum_{i=1}^n\sum_{e\in E_{Q_n}(A_i,B_i)}EC_f(e)=\sum_{i=1}^n\left\lvert
    E_G\left(f^{-1}(A_i),f^{-1}(B_i)\right)\right\rvert\\
    =\sum_{i=1}^nBW(G) = n\cdot BW(G)= n2^{3n/2-2}.\qedhere
  \end{multline*}
\end{proof}

\section*{Concluding Remarks}
In this paper, we have obtained lower bounds for dilation, wirelength, and edge congestion of an
embedding of graphs into $n$-cubes. In particular, we found lower bounds for congestion and
wirelength in terms the bisection width of the guest graph. This technique allows the computation of
the exact dilation and wirelength for a range of networks. An open problem remains to determine the
edge congestion of these networks.  It also opens up the study of embedding parameters which remains
unsolved for several good candidates of guest architectures. Another direction for extending our
work to make it more directly applicable is the study of weighted versions of the embedding problem.

\subsection*{Acknowledgments}
The work of R. Sundara Rajan was partially supported by Project No. 2/48(4)/
2016/NBHM-R\&D-II/11580, National Board of Higher Mathematics (NBHM), Department of Atomic Energy
(DAE), Government of India. Sandi Klav\v zar acknowledges the financial support from the Slovenian
Research Agency (research core funding No.\ P1-0297). Further, the authors would like to thank the
anonymous referees for their comments and suggestions. These comments and suggestions were very
helpful for improving the quality of this paper.

\bibliographystyle{amsplain}
\bibliography{embeddings}

\providecommand{\bysame}{\leavevmode\hbox to3em{\hrulefill}\thinspace}
\providecommand{\MR}{\relax\ifhmode\unskip\space\fi MR }
% \MRhref is called by the amsart/book/proc definition of \MR.
\providecommand{\MRhref}[2]{%
  \href{http://www.ams.org/mathscinet-getitem?mr=#1}{#2}
}
\providecommand{\href}[2]{#2}
\begin{thebibliography}{10}

\bibitem{Adolphson1973}
Donald~L. Adolphson and Te~Chiang Hu, \emph{Optimal linear ordering}, {SIAM}
  Journal on Applied Mathematics \textbf{25} (1973), no.~3, 403--423.

\bibitem{Bezrukov2000}
Sergei~L. Bezrukov, Joseph~D. Chavez, Larry~H. Harper, Marcus R\"ottger, and
  Ulf-Peter Schroeder, \emph{The congestion of $n$-cube layout on a rectangular
  grid}, Discrete Mathematics \textbf{213} (2000), no.~1-3, 13--19.

\bibitem{Bezrukov1998}
Sergei~L. Bezrukov, Joseph~D. Chavez, Larry~H. Harper, Markus R\"ottger, and
  Ulf-Peter Schroeder, \emph{Embedding of hypercubes into grids}, Mathematical
  Foundations of Computer Science 1998, Lecture Notes in Computer Science, vol.
  1450, Springer Berlin Heidelberg, 1998, pp.~693--701.

\bibitem{Bezrukov1998a}
Sergei~L. Bezrukov and Ulf-Peter Schroeder, \emph{The cyclic wirelength of
  trees}, Discrete Applied Mathematics \textbf{87} (1998), no.~1-3, 275--277.

\bibitem{Bezrukov1999}
Sergei~S. Bezrukov, \emph{Edge isoperimetric problems on graphs}, Graph theory
  and combinatorial biology (L\'aszl\'o Lov\'asz, ed.), Bolyai Society
  mathematical studies, vol.~7, J\'anos Bolyai Mathematical Society, 1999,
  pp.~157--197.

\bibitem{bossard-2014}
Antoine Bossard, \emph{The decycling problem in hierarchical cubic networks},
  Journal of Supercomputing \textbf{69} (2014), no.~1, 293--305.

\bibitem{Bossard2014}
\bysame, \emph{The decycling problem in hierarchical cubic networks}, The
  Journal of Supercomputing \textbf{69} (2014), no.~1, 293--305.

\bibitem{Chaudhary1993}
Vipin Chaudhary and Jake~K. Aggarwal, \emph{A generalized scheme for mapping
  parallel algorithms}, {IEEE} Transactions on Parallel and Distributed Systems
  \textbf{4} (1993), no.~3, 328--346.

\bibitem{Chavez1998}
Joseph~D. Chavez and Rolland Trapp, \emph{The cyclic cutwidth of trees},
  Discrete Applied Mathematics \textbf{87} (1998), no.~1-3, 25--32.

\bibitem{Chen1995}
Wai-Kai Chen and Matthias~F. Stallmann, \emph{On embedding binary trees into
  hypercubes}, Journal of Parallel and Distributed Computing \textbf{24}
  (1995), no.~2, 132--138.

\bibitem{Cheng2018}
Qi~Cheng, Pingshan Li, and Min Xu, \emph{Conditional (edge-)fault-tolerant
  strong {M}enger (edge) connectivity of folded hypercubes}, Theoretical
  Computer Science \textbf{728} (2018), 1--8.

\bibitem{Choudum2002}
Sheshayya~A. Choudum and V.~Sunitha, \emph{Augmented cubes}, Networks
  \textbf{40} (2002), no.~2, 71--84.

\bibitem{Dvorak2007}
Tom{\'{a}}{\v{s}} Dvo{\v{r}}{\'{a}}k, \emph{Dense sets and embedding binary
  trees into hypercubes}, Discrete Applied Mathematics \textbf{155} (2007),
  no.~4, 506--514.

\bibitem{J2007}
Jianxi Fan and Xiaohua Jia, \emph{Embedding meshes into crossed cubes},
  Information Sciences \textbf{177} (2007), no.~15, 3151--3160.

\bibitem{Fan2006}
Jianxi Fan, Xiaohua Jia, and Xiaola Lin, \emph{Complete path embeddings in
  crossed cubes}, Information Sciences \textbf{176} (2006), no.~22, 3332--3346.

\bibitem{Fan2019}
Weibei Fan, Jianxi Fan, Cheng-Kuan Lin, Guijuan Wang, Baolei Cheng, and Ruchuan
  Wang, \emph{An efficient algorithm for embedding exchanged hypercubes into
  grids}, The Journal of Supercomputing \textbf{75} (2019), no.~2, 783--807.

\bibitem{Fan2019a}
Weibei Fan, Jianxi Fan, Cheng-Kuan Lin, Yan Wang, Yue-Juan Han, and Ruchuan
  Wang, \emph{Optimally embedding $3$-ary $n$-cubes into grids}, Journal of
  Computer Science and Technology \textbf{34} (2019), no.~2, 372--387.

\bibitem{Fang2005}
Jywe-Fei Fang and Kuan-Chou Lai, \emph{Embedding the incomplete hypercube in
  books}, Information Processing Letters \textbf{96} (2005), no.~1, 1--6.

\bibitem{Guo2013}
Litao Guo and Xiaofeng Guo, \emph{Fault tolerance of hypercubes and folded
  hypercubes}, The Journal of Supercomputing \textbf{68} (2013), no.~3,
  1235--1240.

\bibitem{Han2010}
Yuejuan Han, Jianxi Fan, Shukui Zhang, Jiwen Yang, and Peide Qian,
  \emph{Embedding meshes into locally twisted cubes}, Information Sciences
  \textbf{180} (2010), no.~19, 3794--3805.

\bibitem{Harper1964}
Larry~H. Harper, \emph{Optimal assignments of numbers to vertices}, Journal of
  the Society for Industrial and Applied Mathematics \textbf{12} (1964), no.~1,
  131--135.

\bibitem{Harper1966}
\bysame, \emph{Optimal numberings and isoperimetric problems on graphs},
  Journal of Combinatorial Theory \textbf{1} (1966), no.~3, 385--393.

\bibitem{Heydemann1989}
Marie-Claude Heydemann, J.~C. Meyer, and Dominique Sotteau, \emph{On forwarding
  indices of networks}, Discrete Applied Mathematics \textbf{23} (1989), no.~2,
  103--123.

\bibitem{JohnH.Lindsey1964}
John H.~Lindsey II, \emph{Assignment of numbers to vertices}, The American
  Mathematical Monthly \textbf{71} (1964), no.~5, 508.

\bibitem{Imrich2008}
Wilfried Imrich, Sandi Klav\v{z}ar, and Douglas~F. Rall, \emph{Topics in graph
  theory: graphs and their cartesian product}, CRC Press, 2008.

\bibitem{Kuo2017}
Che-Nan Kuo and Yu-Huei Cheng, \emph{Cycles embedding in folded hypercubes with
  conditionally faulty vertices}, Discrete Applied Mathematics \textbf{220}
  (2017), 55--59.

\bibitem{Lai1999}
Yung-Ling Lai and Kenneth Williams, \emph{A survey of solved problems and
  applications on bandwidth, edgesum, and profile of graphs}, Journal of Graph
  Theory \textbf{31} (1999), no.~2, 75--94.

\bibitem{Leighton2014}
F.~Thompson Leighton, \emph{Introduction to parallel algorithms and
  architectures: arrays{\textperiodcentered} trees{\textperiodcentered}
  hypercubes}, Morgan Kaufmann Publishers, 1992.

\bibitem{Leung1984}
Joseph Y.-T. Leung, Oliver Vornberger, and James~D. Witthoff, \emph{On some
  variants of the bandwidth minimization problem}, {SIAM} Journal on Computing
  \textbf{13} (1984), no.~3, 650--667.

\bibitem{li-2010}
Yamin Li, Shietung Peng, and Wanming Chu, \emph{Metacube---a versatile family
  of interconnection networks for extremely large-scale supercomputers},
  Journal of Supercomputing \textbf{53} (2010), no.~2, 329--351.

\bibitem{Liu2017}
Aixia Liu, Shiying Wang, Jun Yuan, and Jing Li, \emph{On $g$-extra conditional
  diagnosability of hypercubes and folded hypercubes}, Theoretical Computer
  Science \textbf{704} (2017), 62--73.

\bibitem{liu-2019}
Huiqing Liu, Shunzhe Zhang, and Dong Li, \emph{On g-extra conditional
  diagnosability of hierarchical cubic networks}, Theoretical Computer Science
  \textbf{790} (2019), no.~1, 66--79.

\bibitem{Lu2009}
You Lu and Jun-Ming Xu, \emph{Panconnectivity of {C}artesian product graphs},
  The Journal of Supercomputing \textbf{56} (2009), no.~2, 182--189.

\bibitem{Manuel2009}
Paul Manuel, Indra Rajasingh, Bharati Rajan, and Helda Mercy, \emph{Exact
  wirelength of hypercubes on a grid}, Discrete Applied Mathematics
  \textbf{157} (2009), no.~7, 1486--1495.

\bibitem{MANUEL2012}
Paul Manuel, Indra Rajasingh, and R.~Sundara Rajan, \emph{Embedding variants of
  hypercubes with dilation 2}, Journal of Interconnection Networks \textbf{13}
  (2012), 1250004.

\bibitem{Matsubayashi2015}
Akira Matsubayashi, \emph{Separator-based graph embedding into multidimensional
  grids with small edge-congestion}, Discrete Applied Mathematics \textbf{185}
  (2015), 119--137.

\bibitem{Miller2014}
Mirka Miller, R.~Sundara Rajan, Natarajan Parthiban, and Indra Rajasingh,
  \emph{Minimum linear arrangement of incomplete hypercubes}, The Computer
  Journal \textbf{58} (2014), no.~2, 331--337.

\bibitem{Opatrny2000}
Jaroslav Opatrn\'y and Dominique Sotteau, \emph{Embeddings of complete binary
  trees into grids and extended grids with total vertex-congestion 1}, Discrete
  Applied Mathematics \textbf{98} (2000), no.~3, 237--254.

\bibitem{Rajantoappear}
R.~Sundara Rajan, T.~M. Rajalaxmi, Jia-Bao Liu, and Guruswamy Sethuraman,
  \emph{Wirelength of embedding complete multipartite graphs into certain
  graphs}, Discrete Applied Mathematics \textbf{280} (2020), 221--236.

\bibitem{Rajasingh2011}
Indra Rajasingh and Micheal Arockiaraj, \emph{Linear wirelength of folded
  hypercubes}, Mathematics in Computer Science \textbf{5} (2011), no.~1,
  101--111.

\bibitem{Rajasingh2012}
Indra Rajasingh, Bharati Rajan, and R.~Sundara Rajan, \emph{Embedding of
  hypercubes into necklace, windmill and snake graphs}, Information Processing
  Letters \textbf{112} (2012), no.~12, 509--515.

\bibitem{Rajasingh2012a}
\bysame, \emph{Embedding of special classes of circulant networks, hypercubes
  and generalized {P}etersen graphs}, International Journal of Computer
  Mathematics \textbf{89} (2012), no.~15, 1970--1978.

\bibitem{Ramanathan1988}
Parameswaran Ramanathan and Kang~G. Shin, \emph{Reliable broadcast in hypercube
  multicomputers}, {IEEE} Transactions on Computers \textbf{37} (1988), no.~12,
  1654--1657.

\bibitem{Saad1988}
Yousef Saad and Martin~H. Schultz, \emph{Topological properties of hypercubes},
  {IEEE} Transactions on Computers \textbf{37} (1988), no.~7, 867--872.

\bibitem{Shalini2018}
Arul~Jeya Shalini, Jessie Abraham, and Micheal Arockiaraj, \emph{A linear time
  algorithm for embedding locally twisted cube into grid network to optimize
  the layout}, Discrete Applied Mathematics (2018), in press,
  doi:\href{https://doi.org/10.1016/j.dam.2018.06.039}{10.1016/j.dam.2018.06.039}.

\bibitem{SKORIN-KAPOV1993}
Jadranka Skorin-Kapov and Asoo~J. Vakharia, \emph{Scheduling a flow-line
  manufacturing cell: a tabu search approach}, International Journal of
  Production Research \textbf{31} (1993), no.~7, 1721--1734.

\bibitem{Stacho1998}
Ladislav Stacho and Imrich Vrt'o, \emph{Bisection width of transposition
  graphs}, Discrete Applied Mathematics \textbf{84} (1998), no.~1-3, 221--235.

\bibitem{Wiener1947}
Harry Wiener, \emph{Structural determination of paraffin boiling points},
  Journal of the American Chemical Society \textbf{69} (1947), no.~1, 17--20.

\bibitem{Wu1985}
Angela~Y. Wu, \emph{Embedding of tree networks into hypercubes}, Journal of
  Parallel and Distributed Computing \textbf{2} (1985), no.~3, 238--249.

\bibitem{Xu2013}
Junming Xu, \emph{Topological structure and analysis of interconnection
  networks}, Network theory and applications, vol.~7, Springer Science \&
  Business Media, 2013.

\bibitem{Yang2017}
Weihua Yang, Shuli Zhao, and Shurong Zhang, \emph{Strong {M}enger connectivity
  with conditional faults of folded hypercubes}, Information Processing Letters
  \textbf{125} (2017), 30--34.

\bibitem{yun-2016}
Jun Yuan, Aixia Liu, Xiao Qin, Jifu Zhang, and Jing Li, \emph{g-good-neighbor
  conditional diagnosability measures for 3-ary $n$-cube networks}, Theoretical
  Computer Science \textbf{626} (2016), no.~1, 144--162.

\end{thebibliography}

\end{document}